\newcommand{\prob}{\mathbf{P}}
\newcommand{\esp}{\mathbf{E}}
\newcommand{\ud}{\text{\normalfont d}}
\newcommand{\calB}{\mathcal{B}}
\newcommand{\R}{\mathbf{R}}
\renewcommand{\phi}{\varphi}
\newcommand{\fin}{
$\hfill
\mathbin{\vbox{\hrule\hbox{\vrule height1.5ex \kern.6em
\vrule height1.5ex}\hrule}}$}
\newtheorem{prop1}{Proposition}[section]
\newtheorem{lem1}[prop1]{Lemma}
\newtheorem{theo}[prop1]{Theorem}
\newtheorem{hyp}[prop1]{Assumption}
\newtheorem{hyps}[prop1]{Assumptions}
\newtheorem{rem}[prop1]{Remark}
\email{\vspace{-0.2cm}romain.azais@inria.fr}
\email{\vspace{-0.2cm}dufour@math.u-bordeaux1.fr}
\email{\vspace{-0.7cm}anne.petit@u-bordeaux2.fr}
\keywords{Non-homogeneous marked renewal process, nonparametric estimation, jump rate estimation, Nelson-Aalen estimator, asymptotic consistency, ergodicity of Markov chains}
\subjclass[2010]{Primary:  62G05, Secondary: 62M09}
\begin{document}
% ======================================================================
\title[Nonparametric estimation of the jump rate for NHMRP's]
{Nonparametric estimation of the jump rate for non-homogeneous marked renewal processes}
\author{Romain Aza\"{\i}s}
\author{François Dufour}
\author{Anne Gégout-Petit}
\address{
INRIA Bordeaux Sud-Ouest, team CQFD, France and Universit\'e Bordeaux, IMB, CNRS UMR 5251,
351, Cours de la Libération, 33405 Talence cedex, France.
}

\thanks{This work was supported by ARPEGE program of the French National Agency of Research (ANR), project “FAUTOCOES”, number ANR-09-SEGI-004.}

% ======================================================================
\begin{abstract}
This paper is devoted to the nonparametric estimation of the jump rate and the cumulative rate for a general class of non-homogeneous marked renewal processes, defined on a separable metric space. In our framework, the estimation needs only one observation of the process within a long time. Our approach is based on a generalization of the multiplicative intensity model, introduced by Aalen in the seventies. We provide consistent estimators of these two functions, under some assumptions related to the ergodicity of an embedded chain and the characteristics of the process. The paper is illustrated by a numerical example.\\
~

\noindent\textsc{Résumé.} Ce papier est consacré à l'estimation non-paramétrique du taux de saut et du taux de saut cumulé pour une classe générale de processus de renouvellement marqués non-homogènes, définis sur un espace métrique séparable. Dans notre cadre de travail, l'estimation nécessite seulement une observation du processus en temps long. Notre approche est basée sur une généralisation du modèle à intensité multiplicative introduit par Aalen dans les années soixante-dix. Nous donnons des estimateurs consistants de ces deux fonctions, sous des hypothèses portant sur l'ergodicité d'une chaîne immergée et sur les caractéristiques du processus. Le papier est illustré par un exemple numérique.
\end{abstract}
% ======================================================================
\maketitle

\section{Introduction}

The purpose of this paper is to develop a nonparametric method for estimating the jump rate of a non-homogeneous marked renewal process, when only one observation of the process within a long time is available. Our estimation procedure is premised on a generalization of the well-known multiplicative intensity model, investigated by Aalen in \cite{Aal77,Aal78}.

We introduce a general class of non-homogeneous marked renewal processes (NHMRP's), defined on an open subset of a separable metric space. The motion of the process depends on three characteristics namely the jump rate $\lambda$, which specifies the interarrival times, the transition kernel $Q$, and a function $t^\star$, which plays the role of a deterministic censorship depending on the state of the process. In this framework, the jump rate $\lambda$ is a function of two variables: a spatial mark and time. Here, our aim is to propose a nonparametric method for estimating both the jump rate $\lambda$ and the cumulative rate from only one observation of the process within a long time interval. In addition, the class of non-homogeneous marked renewal processes which we consider may be related to particular piecewise-deterministic Markov processes (PDMP's, see the book \cite{Dav}), whose transition kernel does not depend on time. This paper is also a keystone of \cite{Az}, in which we provide a consistent nonparametric estimator of the conditional density associated to the jump rate of a general PDMP.

Aalen suggested in the middle of the seventies the famous multiplicative intensity model (see his PhD thesis \cite{AalPHD}, or \cite{Aal77,Aal78}). In this work \cite{AalPHD,Aal77,Aal78}, it is assumed that the intensity of the underlying counting process $N$ can be written as the product of a predictable process $Y$ and a deterministic function $\lambda$, called jump rate or hazard rate. In this framework, the so-called Nelson-Aalen estimator provides an estimate of the cumulative rate $\Lambda(t)=\int_0^t\lambda(s)\ud s$. Ramlau-Hansen suggested a few years later, in \cite{Ram83}, a nonparametric method for estimating directly the rate $\lambda$, by smoothing the Nelson-Aalen estimator by kernel methods.

A large number of estimation problems in survival analysis or in statistics of processes are related to counting processes depending on a spatial variable. This one may be seen as a mark or as a covariate. In this context, Aalen's estimator is proved to be a very popular and powerful method, since the multiplicative assumption is satisfied in a large variety of applications (see for instance \cite{And}). In particular, one can apply the Nelson-Aalen approach for estimating the jump rate of a marked counting process, whose state space is finite, from a large number of independent observations. More recently in 2011, Comte \textit{et al.} proposed in \cite{COMTE} an adaptive method for estimating the jump rate of a marker-dependent counting process, under the multiplicative assumption.

There exists an extensive literature on nonparametric and semiparametric estimation methods when the spatial mark belongs to a continuous state space. However, we do not attempt to present an exhaustive survey on this topic. A significant list of references on this research field can be found in \cite{AalenHistory,And,MR676128,Martinussen2006} and the references therein. In particular, McKeague and Utikal were interested in \cite{McK90} in the estimation of the jump rate when the covariate takes its values in $[0,1]$. Their approach is based on smoothing a Nelson-Aalen type estimator both in spatial and time directions. In particular, they prove the consistency of their estimator. Li and Doss chose another approach, based on a local linear fit in the spatial direction (see \cite{MR1345201}). They extended McKeague and Utikal's work for the multidimensional case, and proved weak convergence results. One may also refer to the papers written by Utikal \cite{MR1208878,MR1445041} about jump rate estimation for two special classes of marked counting processes, under some continuous-time martingale assumptions. The Euclidean structure of the covariate state space is a keystone of the papers mentioned above. At the same time, nonparametric approaches have also been considered by Beran in \cite{BERAN}, Stute in \cite{MR840519} and Dabrowska in \cite{MR932943}, but for independent observations. Semiparametric methods have also been considered by many authors, beginning with Cox in \cite{Cox72}. The interested reader may consult the book \cite{And} and the references therein for a complete review of the literature on these models.

In many aspects, our approach and the results mentioned above are different and complementary. These differences may be briefly described as follows. Our paper is based on a generalization of the multiplicative intensity model, involving a discretization of the state space, and, as a consequence, an approximation of the functions of interest. Indeed, we do not impose any conditions on the state space, such as to be Euclidean. This notably excludes the methods investigated by McKeague and Utikal \cite{McK90}, Li and Doss \cite{MR1345201}, or Utikal \cite{MR1208878,MR1445041}. Furthermore, these authors consider some assumptions about both continuous-time martingale properties and the asymptotic behavior of $Y$. From a practical point of view, this kind of assumption is not completely satisfactory due to the fact that these conditions may be difficult to check, especially in our case. On our side, we overcome this difficulty by providing tractable conditions, directly related to the primitive data of the process, to ensure the consistency of our estimator.

The present paper is divided into two parts. In the first one, we consider that the transition kernel only charges a finite set of points. It amounts to considering the state space is a discrete one. In this context, Theorem \ref{th:mgdiscret} states that the multiplicative model is satisfied. In the second part, we assume that the kernel $Q$ is diffuse, that is to say, it does not charge singletons. If the $Z_i$'s denote the marks of the underlying process, for any $x$ and $i$, the indicator function $\mathbf{1}_{\{Z_i=x\}}$ is almost surely null. This rules out the method developed in the discrete case. Our procedure relies on a partition of the state space, labeled $(A_k)$. In this context, it appears intuitive to consider the counting process
$$N_n(A_k,t)=\sum_{i=0}^{n-1} \mathbf{1}_{\{Z_i\in A_k\}}\mathbf{1}_{\{S_{i+1}\leq t\}},$$
where the $S_i$'s denote the interarrival times. Although Aalen's multiplicative model does not hold for this counting process, the stochastic intensity of $N_n(A_k,t)$ is almost surely equivalent to the product $Y_n(A_k,t)l(A_k,t)$ (see Proposition \ref{defl}), where $l(A_k,t)$ is an approximation of the jump rate $\lambda(x,t)$, for $x\in A_k$, and
$$Y_n(A_k,t)=\sum_{i=0}^{n-1}\mathbf{1}_{\{Z_i\in A_k\}}\mathbf{1}_{\{S_{i+1}\geq t\}}.$$
In this context, it is natural to introduce the following processes,
$$\widehat{L}_n(A_k,t) = \int_0^tY_n(A_k,s)^+\ud N_n(A_k,s) ,$$
where $Y_n(A_k,t)^+$ is the generalized inverse of $Y_n(A_k,t)$, and
$$L_n^\ast(A_k,t) = \int_0^t l(A_k,s)\mathbf{1}_{\{Y_n(A_k,s)>0\}}\ud s,$$
In Aalen's papers, the difference $\widehat{L}_n(A_k,t)-L_n^\ast(A_k,t)$ is a continuous-time martingale, whereas on our part, it is not the case since there exists an extra-term $a_n(t)$, which vanishes when $n$ goes to infinity. Intuitively, this means that the multiplicative model asymptotically makes sense. Referring to both Lenglart's inequality and the asymptotic behavior of the extra-term $a_n(t)$, we prove that $\widehat{L}_n(A_k,t)$ is a consistent estimator of $L(A_k,t)=\int_0^tl(A_k,s)\ud s$ (see Proposition \ref{pppaaa}). We deduce from this a consistent estimator of the cumulative rate $\Lambda(x,t)=\int_0^t\lambda(x,s)\ud s$ (see Theorem \ref{est-sim-final}), since $L(A_k,t)$ and $\Lambda(x,t)$ are close for $x\in A_k$ (see Lemma \ref{txlip}). Next, we focus on smoothing this estimator by kernel methods in order to suggest a consistent estimator of $l(A_k,t)$ (see Proposition \ref{kernel}) and, therefore, of $\lambda(x,t)$ (see Theorem \ref{pms:theo:CVfin}). An inherent difficulty throughout this paper is related to the presence of the deterministic censorship.

The paper is organized in the following way. We first give, in Section \ref{s:def}, the precise definition of the class of non-homogeneous marked renewal processes which we are interested in, and we provide an example of application in reliability. We state also some technical results about continuous-time martingales and conditional independences. Section \ref{s:discrete} is devoted to the discrete case, where we consider that the transition kernel $Q$ only charges a finite number of points. The main contribution of the paper lies in Section \ref{s:continuous}, in which we do not impose any conditions on the state space. In this part, we provide consistent estimators of the cumulative rate (see Theorem \ref{est-sim-final}) and the jump rate (see Theorem \ref{pms:theo:CVfin}). Finally in Section \ref{s:simu}, we present a numerical example to illustrate the good behavior of our estimators on finite sample size.

		\section{Definition and first results}
		\label{s:def}
		
In this section, we first define the class of non-homogeneous marked renewal processes under consideration. We consider a piecewise-constant continuous-time process $(X_t)_{t\geq0}$. For any $t\geq0$, $X_t$ takes its values on $E$, which is an open subset of a separable metric space $(\mathcal{E},d)$. The motion of $(X_t)_{t\geq0}$ may be given for any $t\geq0$ in the following way,
$$\forall t\geq0,~X_t=Z_n \quad\text{if}\quad S_0+\dots + S_n\leq t<S_0+\dots+S_{n+1}.$$
The $Z_n$'s correspond to the locations of $(X_t)_{t\geq0}$, while the $S_n$'s denote the interarrival times. We assume that $(Z_n)_{n\geq0}$ is a Markov chain on $(E,\calB(E))$, defined on a probability space $(\Omega,\mathcal{A},\prob_{\nu_0})$, whose transition kernel is denoted by $Q$. The distribution of the starting point $Z_0$ is assumed to be $\nu_0$. An equivalent formulation (see for instance Theorem 2.4.3 of \cite{BEK}) is given by,
	\begin{equation}
		\label{dyn:Z}
		\forall n\geq1,~Z_n=\psi(Z_{n-1},\varepsilon_{n-1}),
	\end{equation}
where $\psi$ is a measurable function, and $(\varepsilon_n)_{n\geq0}$ is a sequence of independent and identically distributed random variables. Now, one defines the sequence $(S_n)_{n\geq0}$ on $(\mathbf{R}_+,\calB(\R_+))$ from two functions: $\lambda:E\times\mathbf{R}_+\to\R_+$ which plays the role of the jump rate, and the deterministic censorship $t^\star:E\to ]0,+\infty]$. For each integer $n\geq1$, the distribution of $S_n$ satisfies, for any $t\geq0$,
\begin{eqnarray}
\quad \prob_{\nu_0}(S_n>t|\{Z_i : i\geq0\},S_0,\dots,S_{n-1}) &=& \prob_{\nu_0}(S_n>t|Z_{n-1}) \label{dyn:S} \\
&=& \exp\left(-\int_0^t\lambda(Z_{n-1},s)\ud s\right)\mathbf{1}_{\{0\leq t<t^\star(Z_{n-1})\}} . \nonumber
\end{eqnarray}
In addition, we assume that $S_0=0$. Hence, there exist a function $\phi$ and a sequence of independent and identically distributed random variables $(\delta_n)_{n\geq0}$, which is independent of the sequence $(\varepsilon_n)_{n\geq0}$, such that,
$$\forall n\geq1,~S_n=\phi(Z_{n-1},\delta_{n-1}).$$
One assumes that both the sequences $(\varepsilon_n)_{n\geq0}$ and $(\delta_n)_{n\geq0}$ are independent of $Z_0$.

We recall that we are interested here in the nonparametric estimation of the jump rate $\lambda$, from one observation of the embedded chain $(Z_n,S_n)_{n\geq0}$. This class of non-homogeneous renewal models may be related to PDMP's, for which the transition kernel $Q$ does not depend on time. Hence, the estimation method developed in this paper is very useful in order to estimate the conditional distribution of the interarrival times for PDMP's (see \cite{Az}). Nevertheless, providing a method for estimating the jump rate for this class of stochastic models has an intrinsic interest. In the following, we present an example in reliability of non-homogeneous marked renewal process satisfying the model mentioned above.

Let us consider a machine, whose production configuration takes its values in an open subset of $\mathbf{R}^d$. The dynamic of the regime is assumed to be a non-homogeneous renewal process: the state is piecewise-constant until a failure spontaneously occurs. One naturally considers that the failure rate depends on the production regime. When the machine breaks down, the repair occurs instantaneously, and the machine configuration is randomly changed, according to a transition kernel $Q$ depending only on the previous working state. In addition, one may consider that there exists a deterministic period of inspection, depending on the production configuration too. The inspection is instantaneous and the next regime changes according to the kernel $Q$. The estimation of the failure rate from only one observation of the regime state within a long time may bring some informations about the behavior of the production machine. The main benefit of this approach is as follows: the estimation does not need the observation of a great number of similar machines.

One may associate to the jump rate $\lambda$, the cumulative rate, the survival function and the probability density function, which are related to it. The conditional density $f$ satisfies,
	\begin{equation}
			\label{pms:def:f}
		\forall \xi\in E, ~ \forall t\geq0,~f(\xi,t)  = \lambda(\xi,t) \exp\left( - \int_0^t {\lambda}( \xi,s)\ud s\right).
	\end{equation}
The cumulative rate $\Lambda$ is defined by,
	\begin{equation}
			\label{pms:def:LAMBDA}
		\forall \xi\in E,~\forall t\geq0,~\Lambda(\xi,t) = \int_0^t {\lambda}( \xi,s) \ud s .
	\end{equation}
Finally, the conditional survival function is denoted by $G$.
	\begin{equation}
			\label{pms:def:G}
		\forall \xi \in E,~\forall t\geq0,~ G(\xi,t)= \exp\left(-\int_0^t \lambda(\xi,s)\ud s\right).
	\end{equation}
We have the straightforward relation between these functions: $\lambda=f/G$. For each $n\geq1$, $\mathcal{G}_n$ denotes the $\sigma$-field generated by the $n$ first $Z_i$'s,
	\begin{equation}
		\label{def:Gn}	
			\mathcal{G}_n = \sigma(Z_0 , \dots ,Z_{n-1}) .
	\end{equation}
For each integer $i$, the one-jump counting process $N^{i+1}$ is given for any $t\geq0$, by
$$N^{i+1}(t) = \mathbf{1}_{\{S_{i+1}\leq t\}},$$
and $(\mathcal{F}_t^{i+1})_{t\geq0}$ denotes the associated filtration. In this section, we shall prove two results: the first one is related to two conditional independence properties; the second one deals with the continuous-time martingale associated to the counting process $N^{i+1}$ in a special filtration.

		% ============================================================  PROPOSITION
		\begin{prop1}	\label{prop:indcond}
		Let $n$ be an integer and $1\leq i\leq n$. For each integer $j\neq i$, let $t_j\geq0$ and $t\geq0$. Then, we have
		$$ \bigvee_{j\neq i} \mathcal{F}_{t_j}^{j}  ~   \underset{\mathcal{G}_n}{\bot} ~ \mathcal{F}_t^i 	\qquad\text{and}\qquad    \mathcal{F}_t^i ~ \underset{\sigma(Z_{i-1})}{\bot} ~ \mathcal{G}_n .$$
		Furthermore, we deduce from Proposition 6.8 of \cite{Kal} this immediate corollary: for any $s<t$, $0\leq i\leq n-1$,
		$$
		\bigvee_{j\neq i+1} \mathcal{F}_{s}^j  ~   \underset{\mathcal{G}_n \vee \mathcal{F}_s^{i+1}}{\bot} ~ \mathcal{F}_t^{i+1}
		\qquad\text{and}\qquad
		\mathcal{F}_t^{i+1} ~\underset{\sigma(Z_i) \vee \mathcal{F}_s^{i+1}}{\bot}~\mathcal{G}_n .
		$$
		\end{prop1}
		% ===============================================================
		
		\begin{proof}
		The reader may find the proof in Appendix \ref{app1}.
		\end{proof}

\noindent
We have also a continuous-time martingale property. This is the one associated to the counting process $N^{i+1}$.

		% ==========================================================		LEMME
		\begin{lem1}																	
			\label{lem:mg}
		For each integer $i$, the process $M^{i+1}$ given by,
		\begin{equation}\label{Miplus1}
			\forall 0\leq t<t^\star(Z_i),~M^{i+1} (t) = N^{i+1}(t) - \int_0^t \lambda(Z_i , u) \mathbf{1}_{\{S_{i+1} \geq u \}} \ud u ,
		\end{equation}
		is a continuous-time martingale in the filtration $(\sigma(Z_i)\vee\mathcal{F}_s^{i+1})_{0\leq s<t^\star(Z_i)}$.
		\end{lem1}
		% =============================================================================
		
		\begin{proof}
		The proof is deferred in Appendix \ref{app1}.
		\end{proof}

Proposition \ref{prop:indcond} and Lemma \ref{lem:mg} are prominent for the next results. In addition, the reference to the underlying probability measure $\prob_{\nu_0}$ will be implicit in the text. For the sake of readability, we shall write $\prob_x$ instead of $\prob_{\delta_{\{x\}}}$.

		\section{Discrete state space}
		\label{s:discrete}
		
		We assume here that the transition kernel $Q$ only charges a finite set which we denote $\{x_1,\dots,x_M\}$. One may associate to each $x_i$ the deterministic exit time $t_i^\star=t^\star(x_i)$. Now, one considers that $k$ is fixed. In this section, we shall prove in Theorem \ref{th:mgdiscret} that the multiplicative model is satisfied for estimating the cumulative rate $\Lambda(x_k,\cdot)$.
		
		For each integer $n$, let us introduce the counting process $N_n(x_k,\cdot)$ by,
		\begin{equation}	\label{pms:def:Ndiscret}
			\forall t \geq 0, ~ N_n (x_k , t) = \sum_{i=0}^{n-1} \mathbf{1}_{\{S_{i+1}\leq t\}} \mathbf{1}_{\{Z_i = x_k\}} .
		\end{equation}
		In addition, for any $t\geq0$, we define
		\begin{equation}	\label{pms:def:Ydiscret}
			Y_n(x_k , t) = \sum_{i=0}^{n-1} \mathbf{1}_{\{S_{i+1} \geq t\}} \mathbf{1}_{\{ Z_i = x_k \}} .
		\end{equation}

		% ====================================================================	THEOREME
		\begin{theo}										
		\label{th:mgdiscret}
		Let $n\geq1$. The process $M_n(x_k , \cdot)$ defined by,
		\begin{equation} \label{pms:def:Mdiscret}
		 \forall 0\leq t < t_k^\star, ~ M_n(x_k , t) = N_n(x_k , t) - \int_0^t \lambda(x_k , s) Y_n(x_k,s) \ud s
		 \end{equation}
		is a $(\widetilde{\mathcal{F}}_t^n)_{0\leq t<t_k^\star}$-continuous-time martingale under $\prob_{\nu_0}$, with,
		$$ \forall 0 \leq t < t_k^\star, ~ \widetilde{\mathcal{F}}_t^n = \mathcal{G}_n \vee \bigvee_{i=0}^{n-1} \mathcal{F}_t^{i+1}.$$
		\end{theo}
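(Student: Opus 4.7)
The plan is to decompose $M_n(x_k,\cdot)$ as a sum of the one-jump martingales from Lemma \ref{lem:mg}, restricted to the visits to $x_k$, and then to transfer the martingale property from the individual filtrations $(\sigma(Z_i)\vee\mathcal{F}_s^{i+1})_s$ to the larger filtration $\widetilde{\mathcal{F}}_s^n$ via the two conditional independences supplied by the corollary of Proposition \ref{prop:indcond}.

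First I would observe that, since $\lambda(Z_i,\cdot)=\lambda(x_k,\cdot)$ on $\{Z_i=x_k\}$,
$$M_n(x_k,t)=\sum_{i=0}^{n-1}\mathbf{1}_{\{Z_i=x_k\}}\left(N^{i+1}(t)-\int_0^t\lambda(x_k,u)\mathbf{1}_{\{S_{i+1}\geq u\}}\ud u\right)=\sum_{i=0}^{n-1}\mathbf{1}_{\{Z_i=x_k\}}M^{i+1}(t).$$
On $\{Z_i=x_k\}$ we have $t^\star(Z_i)=t_k^\star$, so for $t<t_k^\star$ the random variable $M^{i+1}(t)$ is well-defined and integrable by Lemma \ref{lem:mg}, and is $\sigma(Z_i)\vee\mathcal{F}_t^{i+1}\subset\widetilde{\mathcal{F}}_t^n$-measurable; adaptation and integrability of $M_n(x_k,t)$ thus follow, the indicators belonging to $\mathcal{G}_n$.

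For the martingale property, fix $0\leq s<t<t_k^\star$. Since $\mathbf{1}_{\{Z_i=x_k\}}\in\mathcal{G}_n\subset\widetilde{\mathcal{F}}_s^n$, it suffices to prove that
$$\esp\left[M^{i+1}(t)\mid\widetilde{\mathcal{F}}_s^n\right]=\esp\left[M^{i+1}(t)\mid\sigma(Z_i)\vee\mathcal{F}_s^{i+1}\right]$$
for each $i$, after which multiplying by $\mathbf{1}_{\{Z_i=x_k\}}$ and invoking Lemma \ref{lem:mg} (legitimately, as $t<t_k^\star=t^\star(Z_i)$ on the event in question) yields $\mathbf{1}_{\{Z_i=x_k\}}M^{i+1}(s)$, and summation gives $M_n(x_k,s)$. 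The displayed identity is obtained in two stages, each instance of the standard rule: if $\mathcal{A}\perp\mathcal{B}\mid\mathcal{C}$ and $X$ is $\mathcal{A}\vee\mathcal{C}$-measurable, then $\esp[X\mid\mathcal{B}\vee\mathcal{C}]=\esp[X\mid\mathcal{C}]$. First apply this with $\mathcal{A}=\mathcal{F}_t^{i+1}$, $\mathcal{B}=\bigvee_{j\neq i+1}\mathcal{F}_s^{j}$, and $\mathcal{C}=\mathcal{G}_n\vee\mathcal{F}_s^{i+1}$, using the first independence of the corollary (noting $\mathcal{B}\vee\mathcal{C}=\widetilde{\mathcal{F}}_s^n$), which reduces the conditioning to $\mathcal{G}_n\vee\mathcal{F}_s^{i+1}$. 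Then apply it again with $\mathcal{A}=\mathcal{F}_t^{i+1}$, $\mathcal{B}=\mathcal{G}_n$, and $\mathcal{C}=\sigma(Z_i)\vee\mathcal{F}_s^{i+1}$, using the second independence (together with $\sigma(Z_i)\subset\mathcal{G}_n$ to identify $\mathcal{B}\vee\mathcal{C}=\mathcal{G}_n\vee\mathcal{F}_s^{i+1}$), which reduces the conditioning further to $\sigma(Z_i)\vee\mathcal{F}_s^{i+1}$.

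The only delicate point is the bookkeeping: at each stage one must check that $M^{i+1}(t)$ is indeed measurable with respect to the ``$\mathcal{A}\vee\mathcal{C}$''-algebra in play, and that the random cut-off $t^\star(Z_i)$ does not spoil the application of Lemma \ref{lem:mg}. The former is clear from the explicit definition of $M^{i+1}(t)$, combined with $\sigma(Z_i)\subset\mathcal{G}_n$; the latter is harmless because the indicator $\mathbf{1}_{\{Z_i=x_k\}}$ confines every occurrence of $M^{i+1}(t)$ to the event on which $t^\star(Z_i)=t_k^\star>t$, well within the range of validity of Lemma \ref{lem:mg}.
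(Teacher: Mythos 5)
Your proof is correct and follows essentially the same route as the paper: the same decomposition $M_n(x_k,t)=\sum_i\mathbf{1}_{\{Z_i=x_k\}}M^{i+1}(t)$, the same two-stage reduction of the conditioning $\sigma$-algebra via the two conditional independences from the corollary of Proposition \ref{prop:indcond} (the paper cites Proposition 6.6 of \cite{Kal}, which is exactly the ``standard rule'' you invoke), and the same appeal to Lemma \ref{lem:mg} to conclude. Your remark on the cut-off $t^\star(Z_i)$ and the measurability bookkeeping is a welcome explicitness that the paper leaves implicit, but the argument is identical.
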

		% =============================================================================
		
% --------------------------------------------------------------------------------------
\begin{proof}
Let $0\leq s<t<t^\star_k$. Plugging $(\ref{pms:def:Ndiscret})$ and $(\ref{pms:def:Ydiscret})$ in $(\ref{pms:def:Mdiscret})$, we have
\begin{eqnarray}
M_n(x_k , t) &=& \sum_{i=0}^{n-1} \Big\{ N^{i+1}(t) \mathbf{1}_{\{Z_i = x_k\}}  -  \int_0^t \lambda(x_k,u) \mathbf{1}_{\{S_{i+1}\geq u\}} \mathbf{1}_{\{Z_i = x_k\}} \ud u \Big\} \nonumber \\
&=& \sum_{i=0}^{n-1} M^{i+1}(t) \mathbf{1}_{\{Z_i=x_k\}},\label{sumMG}
\end{eqnarray}
by $(\ref{Miplus1})$. Thus, we obtain
$$\esp_{\nu_0}[ M_n(x_k , t) |\widetilde{\mathcal{F}}_s^n ]  = \sum_{i=0}^{n-1}   \Big\{   \esp_{\nu_0}[ N^{i+1}(t) | \widetilde{\mathcal{F}}_s^n ] - \esp_{\nu_0}[ \int_0^t  \mathbf{1}_{\{S_{i+1} \geq u\}} \lambda(x_k , u) \ud u | \widetilde{\mathcal{F}}_s^n ] \Big\} \mathbf{1}_{\{ Z_i = x_k \}}  .$$
On the strength of Proposition \ref{prop:indcond}, we have
$$ \bigvee_{j\neq i+1} \mathcal{F}_{s}^j  ~   \underset{\mathcal{G}_n \vee \mathcal{F}_s^{i+1}}{\bot} ~ \mathcal{F}_t^{i+1}  .$$
Together with Proposition 6.6 of \cite{Kal}, we deduce
\begin{align*}
&\esp_{\nu_0}[ M_n(x_k , t) |\widetilde{\mathcal{F}}_s^n ] \\
&~= \sum_{i=0}^{n-1}   \Big\{   \esp_{\nu_0}[ N^{i+1}(t) | \mathcal{G}_n \vee \mathcal{F}_s^{i+1} ]  - \esp_{\nu_0}[ \int_0^t  \mathbf{1}_{\{S_{i+1} \geq u\}} \lambda(x_k , u) \ud u |\mathcal{G}_n \vee \mathcal{F}_s^{i+1} ]   \Big\}  \mathbf{1}_{\{ Z_i = x_k \}} .
\end{align*}
Furthermore, from Proposition \ref{prop:indcond}, we have
$$ \mathcal{F}_t^{i+1} ~\underset{\sigma(Z_i) \vee \mathcal{F}_s^{i+1}}{\bot}~\mathcal{G}_n .$$
Thus, in the light of Proposition 6.6 of \cite{Kal} again, we have
\begin{eqnarray*}
\esp_{\nu_0}[ M_n(x_k , t) |\widetilde{\mathcal{F}}_s^n ] &=&
 \sum_{i=0}^{n-1}   \Big\{   \esp_{\nu_0}[ N^{i+1}(t) | \sigma(Z_i) \vee \mathcal{F}_s^{i+1} ] \mathbf{1}_{\{ Z_i = x_k \}}  \\
 ~&~& -~ \esp_{\nu_0}[ \int_0^t  \mathbf{1}_{\{S_{i+1} \geq u\}} \lambda(x_k , u) \ud u | \sigma(Z_i) \vee \mathcal{F}_s^{i+1} ] \mathbf{1}_{\{Z_i = x_k\}}   \Big\} .
 \end{eqnarray*}
 By Lemma \ref{lem:mg}, this yields to
$$ \esp_{\nu_0}[ M_n(x_k , t) |\widetilde{\mathcal{F}}_s^n ] = \sum_{i=0}^{n-1} M^{i+1}(s) \mathbf{1}_{\{Z_i = x_k\}} .$$
Finally, together with $(\ref{sumMG})$, $M_n(x_k,\cdot)$ is, therefore, a martingale.
\end{proof}
% ---------------------------------------------------------------------------------------

\noindent
Theorem \ref{th:mgdiscret} states that one may estimate the cumulative rate $\Lambda(x_k,t)$ with the Nelson-Aalen estimator $\widehat{\Lambda}_n(x_k , t)$ given by
$$ \widehat{\Lambda}_n(x_k , t) = \sum_{i=0}^{n-1} \frac{1}{Y_n(x_k , S_{i+1})} \mathbf{1}_{\{S_{i+1} \leq t\}} \mathbf{1}_{\{Z_i=x_k\}} .$$
One refers the interested reader to \cite{Aal77,Aal78} or \cite{And} for a general survey on the properties of this estimator.

		\section{Continuous state space} \label{s:continuous}
			
			The present section is divided into two parts. In the first one, we provide an estimate of the cumulative rate $\Lambda$. The second part deals with the estimation of the jump rate $\lambda$ by smoothing the estimator of $\Lambda$ by kernel methods.

		\subsection{Estimation of $\Lambda$}
		
Let us assume that the transition kernel $Q$ is diffuse, that is to say, $Q$ does not charge singletons. The previous procedure is ruled out, since for any $x\in E$ and each integer $i$, $\mathbf{1}_{\{Z_i=x\}} = 0$ almost surely. As a consequence, we shall naturally approximate under regularity conditions the jump rate in $x$, by the jump rate given the state is in a neighborhood of $x$. As mentioned in the introduction, we shall consider the counting process $N_n(A,t)$ defined for $A\in\calB(E)$ by
$$N_n(A,t)=\sum_{i=0}^{n-1}\mathbf{1}_{\{Z_i\in A\}}\mathbf{1}_{\{S_{i+1}\leq t\}} .$$
We will see $(\ref{pms:expr:Mn})$ that the stochastic intensity of $N_n(A,t)$ in a well-chosen filtration is
$$\sum_{i=0}^{n-1}\mathbf{1}_{\{Z_i\in A\}}\mathbf{1}_{\{S_{i+1}\geq t\}}\lambda(Z_i,t) .$$
The first results of this section deal with the ergodicity of the underlying Markov chains. Their properties are prominent to establish the asymptotic behavior of the stochastic intensity of $N_n(A,t)$ (see Proposition \ref{defl}). Based on these results, we will study in Proposition \ref{pppaaa} and Theorem \ref{cor-cons} the estimation of $L(A,t)=\int_0^tl(A,s)\ud s$, where $l(A,t)$ is an approximation of the jump rate $\lambda(x,t)$, with $x$ in $A$. We will deduce from this an estimator of the cumulative rate $\Lambda(x,t)$ (see Theorem \ref{est-sim-final}). An additional difficulty is related to the invariant measure of $A$, which we have to estimate.

We shall impose some assumptions about both the ergodicity of the Markov chain $(Z_n)_{n\geq0}$ and the characteristics of the process. In the following, $\nu_n$ denotes the distribution of $Z_n$, for each integer $n$.
		
		% =================================================================	HYPOTHESE
		\begin{hyp}	\label{hyp:ergodic}
		There exists a probability measure $\nu$ such that, for any initial distribution $\nu_0=\delta_{\{x\}}$, $x\in E$,
		$$\lim_{n\to+\infty} \| \nu_n - \nu \|_{TV} = 0 .$$
		\end{hyp}
		% =============================================================================	
		
		\noindent
		This assumption may be directly related to the transition kernel of the Markov chain $(Z_n)_{n\geq0}$ (existence of a Foster-Lyapunov's function or Doeblin's condition for instance). We refer the interested reader to \cite{MandT} for results about this kind of connection. This assumption leads to the following results.
		%The Markov chain $(Z_n)_{n\geq0}$ is especially Harris-recurrent.
		
		% =================================================================PROPOSITION
		\begin{prop1} \label{prop:ergodic}
		We have the following statements:\begin{enumerate}
		\item $(Z_n)_{n\geq0}$ is $\nu$-irreducible.
		\item $(Z_n)_{n\geq0}$ is positive Harris-recurrent and aperiodic.
		\item $\nu$ is the unique invariant probability measure of $(Z_n)_{n\geq0}$.
		\end{enumerate}
		\end{prop1}
		% =============================================================================
		
		\begin{proof}
		The proof may be found in Appendix \ref{app2}.
		\end{proof}

Denote by $\eta_n$ the distribution of the couple $(Z_n,S_{n+1})$, and by $\mu_z(\cdot)$ the conditional law of $S_1$ given $Z_0=z$. By construction $(\ref{dyn:S})$, the distribution of $S_{n+1}$ given $Z_n=z$ is also given by $\mu_z(\cdot)$. The following result gives us the probability measure of $(Z_n,S_{n+1})$ according to $(\mu_z)_{z\in E}$ and $\nu_n$.

		% =============================================================================	LEMME
		\begin{lem1}	\label{desint}
		For each integer $n$, $\eta_n$ satisfies, for any $A\times\Gamma\in\mathcal{B}({E})\otimes\mathcal{B}(\mathbf{R}_+)$,
		$$ \eta_n(A\times\Gamma) = \int_{A\times\Gamma} \mu_z(\ud s) \nu_n(\ud z) .$$
		\end{lem1}
		% =============================================================================
		
% ----------------------------------------------------------------------------------------------------	
\begin{proof}
This is the disintegration of the measure $\eta_n$ according to its marginal distribution $\nu_n$.
\end{proof}
% ---------------------------------------------------------------------------------------------

\noindent
We shall see that the sequence $(\eta_n)_{n\geq0}$ converges to the probability measure $\eta$ given by,
$$\forall A\times\Gamma \in \mathcal{B}({E})\otimes\mathcal{B}(\mathbf{R}_+),~\eta(A\times\Gamma) = \int_{A\times\Gamma} \mu_z(\ud s)\nu(\ud z) .$$

		% =============================================================================	LEMME
		\begin{lem1}	\label{lem37}
		For any initial distribution $\nu_0=\delta_{\{x\}}$, $x\in E$,
			$$ \lim_{n\to+\infty} \| \eta_n - \eta \|_{TV}  = 0 .$$
		\end{lem1}
		% =============================================================================

		\begin{proof}
		The proof is given in Appendix \ref{app2}.
		\end{proof}

\noindent
As a consequence, the Markov chain $(Z_n,S_{n+1})_{n\geq0}$ has similar properties of the Markov chain $(Z_n)_{n\geq0}$ given in Proposition \ref{prop:ergodic}.

		% =============================================================================	PROPOSITION
		\begin{prop1}\label{3point8}
		We have the following statements:													
		\begin{enumerate}
		\item $(Z_n,S_{n+1})_{n\geq0}$ is $\eta$-irreducible.
		\item $(Z_n,S_{n+1})_{n\geq0}$ is positive Harris-recurrent and aperiodic.
		\item $\eta$ is the unique (up to a multiple constant) invariant probability measure of the Markov chain $(Z_n,S_{n+1})_{n\geq0}$.
		\end{enumerate}
		\end{prop1}
		% =============================================================================
		
% ----------------------------------------------------------------------------------------
\begin{proof}
One may state this result from Lemma \ref{lem37}, with the arguments given in the proof of Proposition \ref{prop:ergodic}.
\end{proof}
% ------------------------------------------------------------------------------

\noindent		
According to the previous discussion, we shall apply the ergodic theorem to the Markov chains $(Z_n)_{n\geq0}$ and $(Z_n,S_{n+1})_{n\geq0}$. Now, we impose some assumptions on the characteristics of the process.

		% =============================================================================
		\begin{hyps} ~
		\begin{enumerate}
		\item The jump rate $\lambda$ is uniformly Lipschitz, that is,
		$$\exists [\lambda]_{Lip}>0, ~ \forall \xi,\xi'\in E, ~ \forall s \geq 0, ~ | \lambda (\xi , s) - \lambda(\xi',s)| \leq [\lambda]_{Lip} ~\! d(\xi , \xi') .$$
		\item There exists a locally integrable function $M:\mathbf{R}_+\to\mathbf{R}_+$ such that,
		$$\forall \xi\in E,~\forall s\geq0,~\lambda(\xi,s)\leq M(s) .$$
		\item The density $f$ is continuous in time.
		\item $f$ is bounded.
		\item The function $t^\star$ is continuous.
		\end{enumerate}
		\end{hyps}
		% =============================================================================

\noindent
Under these assumptions, one states some intermediate results about $\lambda$, $t^\star$ and $G$.

\noindent

		% =============================================================================	LEMME
		\begin{lem1} \label{r001}
		Let $A\in\mathcal{B}(E)$ be a relatively compact set such that $\overline{A}\cap\partial E=\emptyset$. Thus,
		$$\inf_{\xi\in A} t^\star(\xi)>0 .$$
		In this case, one denotes $\displaystyle t^\star(A) = \inf_{\xi\in A} t^\star(\xi)$. Furthermore,
		$$\forall t\geq0,~\inf_{\xi\in A} G(\xi,t)>0 .$$
		\end{lem1}		
		% =============================================================================
		\begin{proof}
		The proof is deferred in Appendix \ref{app2}.
		\end{proof}

Let us introduce the following notation.
$$
\mathcal{B}_\nu^+ = \big\{ A\in\mathcal{B}(E) ~:~A~\!\text{relatively compact},~\! \nu(A)>0~\!\text{and}~\! \overline{A}\cap\partial E = \emptyset \big\} .
$$
The following lemma states that, for any $A\in\mathcal{B}_\nu^+$, $\lambda$ is bounded on $A\times[0,t^\star(A)[$.

		% =============================================================================	LEMME
		\begin{lem1}
		Let $A\in\mathcal{B}_\nu^+$, $\xi\in A$ and $0\leq t<t^\star(A)$. Thus,					\label{majorlambda}
		$$ \lambda(\xi,t) \leq \frac{\|f\|_{\infty}}{ \inf_{\xi\in A} G\big(\xi,t^\star(A)\big)} .$$
		\end{lem1}
		% =============================================================================

	\begin{proof}
	As $f$ is bounded and $G(\xi,\cdot)$ is decreasing, we obtain by Lemma \ref{r001},
	\begin{eqnarray*}
	\lambda(\xi,t) &\leq& \frac{ \|f\|_{\infty} }{ G(\xi,t) }  \leq  \frac{ \|f\|_{\infty} }{ G\big(\xi,t^\star(A)\big) }.
	\end{eqnarray*}
	This immediately yields to the expected result.
	\end{proof}

\noindent
Let $A\in\mathcal{B}_{\nu}^+$. Let us consider for each integer $n$ the continuous-time process $Y_{n}(A, \cdot)$, defined by,
\begin{equation}
\label{eq:Y} \forall 0\leq t<t^\star(A), ~Y_{n}( A  , t) 	=	\sum_{i=0}^{n-1} \mathbf{1}_{\{S_{i+1} \geq t\}} \mathbf{1}_{\{Z_i \in A \}}  	.
\end{equation}
We shall state two lemmas about the asymptotic behavior of $Y_n$. Before, we focus our attention on the link between $G(z,\cdot)$ and $\mu_z(\cdot)$, for any $z\in A$.

% ==========
\begin{rem}															\label{rem:survie}
Let $A\in\mathcal{B}_{\nu}^+$ and $z\in A$. For any $0\leq t<t^\star(A)$, we have
\begin{eqnarray*}
\mu_z\big([t,+\infty[\big)  &=& \prob_{\nu_0} (S_1\geq t |Z_0 = z) \\
~&=& \prob_{\nu_0}(S_1>t |Z_0=z) ,
\end{eqnarray*}
because $t<t^\star(A)\leq t^\star(z)$. Thus,
$$\mu_z\big([t,+\infty[\big) = G(z,t) .$$
\end{rem}
% ==========

		% =============================================================================	LEMME
		\begin{lem1} \label{lem:ynsurn}
		Let $A\in\mathcal{B}_\nu^+$ and $x\in E$. Thus, for any $0\leq t<t^\star(A)$,
		$$\frac{Y_n(A,t)}{n} \longrightarrow \int_{A} G(z,t)\nu(\ud z) \quad \text{$\prob_{x}$-a.s. as $n\to+\infty$}.$$
		Furthermore, this limit is strictly positive.
		\end{lem1}
		% =============================================================================

% -----------------------------------------------------------------------
\begin{proof}
In the light of Proposition \ref{3point8}, the Markov chain $(Z_n,S_{n+1})_{n\geq0}$ is positive Harris-recurrent and admits $\eta$ as its unique invariant probability measure. Thus, using the ergodic theorem (see for instance Theorem 17.1.7 of \cite{MandT}), we have
$$ \frac{Y_n(A,t)}{n} \to  \int_{A} \mu_z\big([t,+\infty[\big) \nu(\ud z) \quad \text{$\prob_{x}$-\textit{a.s.} as $n\to+\infty$}.$$
Furthermore, for any $z\in A$, as $t<t^\star(A)$ and according to Remark \ref{rem:survie},
$$\mu_z\big([t,+\infty[\big) = G(z,t) .$$
It is a strictly positive number because $\nu(A)>0$ and $\inf_{\xi\in A} G(\xi,t) >0$ by Lemma \ref{r001}.
\end{proof}
% ----------------------------------------------------------------

\noindent
Let $A\in\mathcal{B}_{\nu}^+$ and $0\leq t<t^\star(A)$. Let us introduce the generalized inverse $Y_n(A,t)^+$ of $Y_n(A,t)$ by
\begin{equation}	\label{invY}
Y_n(A,t)^+ = 
\left\{
\begin{array}{cll}
0 &\text{if}&Y_n(A,t) =0 ,\\
\frac{1}{Y_n(A,t)} &\text{else.}&
\end{array}
\right.
\end{equation}

		% =============================================================================	LEMME
		\begin{lem1}																				\label{S003}
		Let $A\in\mathcal{B}_\nu^+$, $0\leq t<t^\star(A)$ and $x\in E$. Thus, for all integers $n$,
		\begin{equation}	\label{ynplus}
		Y_n(A,t)^+\leq 1~\textit{$\prob_x$-a.s.}
		\end{equation}
		and, as $n$ goes to infinity,
		\begin{eqnarray}
		Y_n(A,t)^+ 			&\longrightarrow&0\quad \prob_x\textit{-a.s.},						\label{limyn+} 	\\
		\mathbf{1}_{\{Y_n(A,t)=0\}} 	&\longrightarrow & 0\quad\prob_x\textit{-a.s.},					\label{lim1yn} \\
		\int_0^t \mathbf{1}_{\{Y_{n}(A , s) = 0  \}}  \ud s &\longrightarrow&0 \quad\prob_x\textit{-a.s.}		\nonumber
		\end{eqnarray}
		\end{lem1}
		% =============================================================================

% ------------------------------------------------------------------------------------------------------------------------
\begin{proof}$Y_n(A,t)^+$ is almost surely bounded by $1$, since $Y_n(A,t)$ takes its values on the integers. One immediately obtains the limits $(\ref{limyn+})$ and $(\ref{lim1yn})$, because $Y_n(A,t)/n$ almost surely admits a strictly positive limit by virtue of Lemma \ref{lem:ynsurn}. Finally,
$$
\limsup_{n\to+\infty} \int_0^t \mathbf{1}_{\{Y_{n}(A , s) = 0  \}}  \ud s  \leq\int_0^t \limsup_{n\to+\infty} \mathbf{1}_{\{Y_{n}(A , s) = 0  \}}  \ud s ~=~0,
$$
by $(\ref{lim1yn})$.
\end{proof} 
% -------------------------------------------------------------------------------------------------------------------------------

\noindent
In the following proposition, we shall apply the ergodic theorem in order to define, for any $A\in\mathcal{B}_\nu^+$, the function $l(A,\cdot)$ which is an approximation of the jump rate $\lambda(\xi,\cdot)$, for $\xi\in A$ (see Lemma \ref{txlip}). We also state the continuity of this function.

		% ============================================================	PROPOSITION
		\begin{prop1} \label{defl}
		Let $A\in\mathcal{B}_{\nu}^+$, $0\leq t<t^\star(A)$ and $x\in E$. Thus, when $n$ goes to infinity,
		\begin{equation}	\label{def:petitl}
		Y_n(A,t)^+~\! \sum_{i=0}^{n-1} \lambda(Z_i,t) \mathbf{1}_{\{Z_i\in A\}} \mathbf{1}_{\{S_{i+1}\geq t\}} \longrightarrow  l(A,t) = \frac{ \int_{A} f(z,t) \nu(\ud z)}{ \int_{A} G(z,t) \nu(\ud z)}~\textit{$\prob_{x}$-a.s.}
		 \end{equation}
		 The function $l(A,\cdot)$ is continuous on $[0,t^\star(A)[$. We especially have
		 \begin{equation} \label{Kt}
		 K_t(A)=\sup_{0\leq s\leq t} \big| l(A,s) \big| < +\infty .
		 \end{equation}
		\end{prop1}
		% =============================================================================

% ------------------------------------------------------------------------------
\begin{proof}
The ergodic theorem (Theorem 17.1.7 of \cite{MandT}) applied to $(Z_n,S_{n+1})_{n\geq0}$ leads to
\begin{eqnarray*}
\lim_{n\to+\infty}  \frac{1}{n} \sum_{i=0}^{n-1} \lambda(Z_i,t) \mathbf{1}_{\{Z_i\in A\}} \mathbf{1}_{\{S_{i+1}\geq t\}}   &=&   \int_A \lambda(z,t) \mu_z\big([t,+\infty[\big) \nu(\ud z) ~\textit{$\prob_{x}$-a.s.} \\
~&=& \int_A \lambda(z,t) G(z,t) \nu(\ud z) ~\textit{$\prob_{x}$-a.s.},
\end{eqnarray*}
because $t<t^\star(A)$. Notice that $f(z,t) = \lambda(z,t)G(z,t)$. Thus,
$$
\lim_{n\to+\infty}  \frac{1}{n} \sum_{i=0}^{n-1} \lambda(Z_i,t) \mathbf{1}_{\{Z_i\in A\}} \mathbf{1}_{\{S_{i+1}\geq t\}}   =    \int_A f(z,t) \nu(\ud z) ~\textit{$\prob_{x}$-a.s.}
$$
Furthermore, in the light of Lemma \ref{lem:ynsurn},
\begin{eqnarray*}
\lim_{n\to+\infty}  \frac{1}{n} Y_n(A,t) = \int_A G(z,t)\nu(\ud z) ~\textit{$\prob_{x}$-a.s.}
\end{eqnarray*}
Finally, doing the ratio of these two limits leads to the expected convergence. On the strength of Lebesgue's theorem of continuity under the integral sign, $l(A,\cdot)$ is a continuous function, since $f$ and $G$ are continuous in time and bounded.
\end{proof}
% -------------------------------------------------------------------------

Having established the asymptotic behavior of $Y_n$, we focus on continuous-time martingales. In particular, thanks to Lenglart's inequality for continuous-time martingales, we shall estimate, for any $A\in\mathcal{B}_\nu^+$, the functions $l(A,\cdot)$ and $L(A,\cdot)$, where $L(A,\cdot)$ is given by,
\begin{equation}\label{eq:L}
\forall 0\leq t<t^\star(A), ~L(A,t) 	= \int_0^t l(A,s) \ud s .
\end{equation}

		% =============================================================================	LEMME
		\begin{theo}																			\label{th:mg}
		Let $A\in\mathcal{B}_{\nu}^+$. For each integer $n$, the process $M_{n}( A  , \cdot)$ defined by,
		\begin{equation}\label{Mn}
		\forall 0\leq t<t^\star(A),~ M_{n}( A  , t) = \sum_{i=0}^{n-1} M^{i+1}(t) \mathbf{1}_{\{Z_i\in A\}},
		\end{equation}
		is a continuous-time martingale in the filtration $(\mathcal{G}_n \vee \bigvee_{i=0}^{n-1} \mathcal{F}_t^{i+1})_{0\leq t<t^\star(A)}$.
		\end{theo}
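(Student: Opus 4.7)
The plan is to follow the proof of Theorem \ref{th:mgdiscret} essentially verbatim, replacing the indicator $\mathbf{1}_{\{Z_i = x_k\}}$ by $\mathbf{1}_{\{Z_i\in A\}}$ throughout. Fix $0\leq s<t<t^\star(A)$. By definition $t^\star(A)=\inf_{\xi\in A}t^\star(\xi)$, so on the event $\{Z_i\in A\}$ one has $t<t^\star(Z_i)$, and the individual martingale $M^{i+1}(t)$ from Lemma \ref{lem:mg} is meaningfully defined on that event. Measurability of $M_n(A,t)$ with respect to $\widetilde{\mathcal{F}}_t^n = \mathcal{G}_n\vee\bigvee_{i=0}^{n-1}\mathcal{F}_t^{i+1}$ is immediate from the definition \eqref{Mn}, and integrability of each summand follows from Lemma \ref{majorlambda} combined with Lemma \ref{r001}: on $\{Z_i\in A\}$ the bound $\lambda(Z_i,u)\leq \|f\|_\infty/\inf_{\xi\in A}G(\xi,t^\star(A))$ holds uniformly for $u\in[0,t^\star(A)[$, and this bound is finite. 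Hence the compensator is integrable and $N^{i+1}(t)\leq 1$ is trivially integrable.

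Next I would compute $\esp_{\nu_0}[M_n(A,t)\mid\widetilde{\mathcal{F}}_s^n]$ term by term, pulling the $\sigma(Z_i)$-measurable (and hence $\widetilde{\mathcal{F}}_s^n$-measurable) indicator $\mathbf{1}_{\{Z_i\in A\}}$ out of the inner conditional expectation. The reduction of the conditioning $\sigma$-field proceeds in two steps, mirroring the computation for Theorem \ref{th:mgdiscret}. First, apply the conditional independence
$$\bigvee_{j\neq i+1}\mathcal{F}_s^j~\underset{\mathcal{G}_n\vee\mathcal{F}_s^{i+1}}{\bot}~\mathcal{F}_t^{i+1}$$
from Proposition \ref{prop:indcond}, together with Proposition 6.6 of \cite{Kal}, to drop the conditioning to $\mathcal{G}_n\vee\mathcal{F}_s^{i+1}$. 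Second, apply the independence
$$\mathcal{F}_t^{i+1}~\underset{\sigma(Z_i)\vee\mathcal{F}_s^{i+1}}{\bot}~\mathcal{G}_n$$
to drop it further to $\sigma(Z_i)\vee\mathcal{F}_s^{i+1}$.

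At this point, Lemma \ref{lem:mg} applies directly in the filtration $(\sigma(Z_i)\vee\mathcal{F}_u^{i+1})_{0\leq u<t^\star(Z_i)}$, and yields
$$\esp_{\nu_0}\!\left[N^{i+1}(t)-\int_0^t\lambda(Z_i,u)\mathbf{1}_{\{S_{i+1}\geq u\}}\ud u~\Big|~\sigma(Z_i)\vee\mathcal{F}_s^{i+1}\right]=M^{i+1}(s).$$
Multiplying by the previously extracted indicator and summing over $i$ recovers $\sum_{i=0}^{n-1}M^{i+1}(s)\mathbf{1}_{\{Z_i\in A\}}=M_n(A,s)$, as required.

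The proof is essentially mechanical once Theorem \ref{th:mgdiscret} is in hand, so I do not expect a genuine obstacle. The only delicate point worth emphasizing is that restricting $t$ to $[0,t^\star(A)[$ (rather than $[0,t^\star(Z_i)[$) is precisely what is needed to guarantee that Lemma \ref{lem:mg} can be invoked simultaneously for all $i$ on the event $\{Z_i\in A\}$; this is immediate from the definition of $t^\star(A)$.
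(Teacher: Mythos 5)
Your proposal follows the same overall strategy as the paper: write $M_n(A,\cdot)$ as a sum of the $M^{i+1}(\cdot)\mathbf{1}_{\{Z_i\in A\}}$, reduce the conditioning $\sigma$-field in two steps using Proposition \ref{prop:indcond} and Kallenberg, and then invoke Lemma \ref{lem:mg}. However, the claim to follow Theorem \ref{th:mgdiscret} \emph{essentially verbatim} conceals a step that genuinely has to be added in the continuous case, and which the paper makes explicit. In the discrete case, on $\{Z_i = x_k\}$ one can replace $\lambda(Z_i,u)$ by the \emph{deterministic} function $\lambda(x_k,u)$, so both $N^{i+1}(t)$ and the compensating integral are $\mathcal{F}_t^{i+1}$-measurable, and Proposition 6.6 of Kallenberg applies directly with the conditional independence $\bigvee_{j\neq i+1}\mathcal{F}_s^j\,\bot_{\mathcal{G}_n\vee\mathcal{F}_s^{i+1}}\,\mathcal{F}_t^{i+1}$. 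In the continuous case, on $\{Z_i\in A\}$ the rate $\lambda(Z_i,u)$ remains genuinely random, so $M^{i+1}(t)$ is only $\sigma(Z_i)\vee\mathcal{F}_t^{i+1}$-measurable, not $\mathcal{F}_t^{i+1}$-measurable; Proposition 6.6 then cannot be applied directly with the stated independence.

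To complete your argument, the two independences from Proposition \ref{prop:indcond} must first be upgraded to accommodate $\sigma(Z_i)$: since $\sigma(Z_i)\subset\mathcal{G}_n$ (and $\sigma(Z_i)\subset\sigma(Z_i)\vee\mathcal{F}_s^{i+1}$), one obtains
$\bigvee_{j\neq i+1}\mathcal{F}_s^j\,\bot_{\mathcal{G}_n\vee\mathcal{F}_s^{i+1}}\,\big(\sigma(Z_i)\vee\mathcal{F}_t^{i+1}\big)$
and
$\big(\sigma(Z_i)\vee\mathcal{F}_t^{i+1}\big)\,\bot_{\sigma(Z_i)\vee\mathcal{F}_s^{i+1}}\,\mathcal{G}_n$,
and only then are the two reductions of the conditioning $\sigma$-field justified. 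The paper carries out precisely this upgrade via Corollary 6.8 of \cite{Kal}. This is an elementary observation and easy to supply, but it is \emph{the} point at which the continuous-state proof departs from the discrete one; since you flag the restriction $t<t^\star(A)$ as ``the only delicate point'', you have missed the subtlety that the paper itself highlights. The rest of your argument (including the measurability and integrability remarks, which the paper leaves implicit) is correct.
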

		% =============================================================================
		
% ----------------------------------------------------------------------------------------------
\begin{proof}
Let $0\leq s<t<t^\star(A)$. Then,
$$  \esp_{\nu_0}\big[{M}_n(A,t) |{\mathcal{G}}_n \vee \bigvee_{i=0}^{n-1} \mathcal{F}_s^{i+1} \big] =  \sum_{i=0}^{n-1} \esp_{\nu_0}\big[ M^{i+1}(t)\mathbf{1}_{\{Z_i\in A\}} |\mathcal{G}_n \vee \bigvee_{j=0}^{n-1} 			\mathcal{F}_s^{j+1} \big] . $$
Moreover, on the strength of Proposition \ref{prop:indcond}, we have
$$
\bigvee_{j\neq i+1} \mathcal{F}_{s}^j  ~   \underset{\mathcal{G}_n \vee \mathcal{F}_s^{i+1}}{\bot} ~ \mathcal{F}_t^{i+1}
\qquad\text{and}\qquad
\mathcal{F}_t^{i+1} ~ 		\underset{\sigma(Z_i) \vee \mathcal{F}_s^{i+1}}{\bot}			~ \mathcal{G}_n  .
$$
Therefore, since $\sigma(Z_i)$ is a sub-$\sigma$-field of $\mathcal{G}_n$, we have by Corollary 6.8 of \cite{Kal},
$$
\bigvee_{j\neq i+1} \mathcal{F}_{s}^j  ~   \underset{\mathcal{G}_n \vee \mathcal{F}_s^{i+1}}{\bot} ~ \mathcal{F}_t^{i+1}  \vee \sigma(Z_i) 
\qquad\text{and}\qquad
\sigma(Z_i)\vee\mathcal{F}_t^{i+1} ~	\underset{\sigma(Z_i) \vee \mathcal{F}_s^{i+1}}{\bot}	 ~ \mathcal{G}_n .
$$
Thus, as $M^{i+1}(t)\mathbf{1}_{\{Z_i\in A\}}$ is $\sigma(Z_i)\vee\mathcal{F}_t^{i+1}$-measurable,
\begin{eqnarray*}
\esp_{\nu_0}\big[ M_n(A,t) | \mathcal{G}_n \vee \bigvee_{i=0}^{n-1} \mathcal{F}_s^{i+1} \big] &=& 
\sum_{i=0}^{n-1} \esp_{\nu_0}\big[ M^{i+1}(t)\mathbf{1}_{\{Z_i\in A\}} |\mathcal{G}_n \vee \mathcal{F}_s^{i+1} \big]  \\
~ &=&  \sum_{i=0}^{n-1} \esp_{\nu_0}\big[ M^{i+1}(t) \mathbf{1}_{\{Z_i\in A\}} | \sigma(Z_i) \vee \mathcal{F}_s^{i+1} \big] .
\end{eqnarray*}
Furthermore, with Lemma \ref{lem:mg},
$$ \esp_{\nu_0} \big[  {M}^{i+1}(t) \mathbf{1}_{\{Z_i\in A\}} | \sigma(Z_i) \vee \mathcal{F}_s^{i+1} \big] = M^{i+1}(s) \mathbf{1}_{\{Z_i\in A\}}.$$
Thus,
$$ 
\esp_{\nu_0} \big[ {M}_n(A,t) | \mathcal{G}_n \vee \bigvee_{i=0}^{n-1} \mathcal{F}_s^{i+1} \big] =  \sum_{i=0}^{n-1} M^{i+1}(s)\mathbf{1}_{\{Z_i\in A\}} .
$$
By $(\ref{Mn})$, this ensures that $M_n(A,\cdot)$ is a martingale.
\end{proof}
% ----------------------------------------------------------------------

		\noindent
		For any $A\in\mathcal{B}_\nu^+$, let us introduce the counting process $N_n(A,\cdot)$, defined for any $0\leq t<t^\star(A)$ by
		\begin{equation} \label{pms:def:Ncont}
		N_n(A,t) = \sum_{i=0}^{n-1} \mathbf{1}_{\{S_{i+1}\leq t\}} \mathbf{1}_{\{Z_i\in A\}} .
		\end{equation}

		% =============================================================	LEMME
		\begin{lem1}																	\label{crochet2}
		Let $A\in\mathcal{B}_\nu^+$. For all integers $n$, the process given for all $0\leq s<t^\star(A)$ by
		\begin{equation} \label{pms:def:Mtilde}
		\widetilde{M}_{n}( A  , s) = \int_0^s Y_{n}( A  , u)^+ \ud {M}_{n}( A , u) ,
		\end{equation}
		is a martingale whose predictable variation process $<\widetilde{M}_{n}(A)>$ satisfies for any $x\in E$,
		$$
		\forall 0\leq s<t^\star(A),~<\widetilde{M}_{n}( A )>(s)  \to 0\quad\!\textit{$\prob_x$-a.s.}~\text{as $n\to+\infty$}.
		$$
		\end{lem1}
		% ==========================================================

% ----------------------------------------------------------------
\begin{proof}
By $(\ref{Miplus1})$, $(\ref{Mn})$ and $(\ref{pms:def:Ncont})$, for any $0\leq t<t^\star(A)$, one may differently write $M_n(A,t)$,
\begin{equation}\label{pms:expr:Mn}
M_n(A,t) = N_n(A,t) - \int_0^t \sum_{i=0}^{n-1} \mathbf{1}_{\{S_{i+1}\geq u\}} \mathbf{1}_{\{Z_i\in A\}} \lambda(Z_i,u) \ud u .
\end{equation}
In the light of Theorem \ref{th:mg}, this is a continuous-time martingale. As a consequence, the process $\mathcal{A}_n( A ,\cdot)$ given by, 
$$ \forall 0\leq s<t^\star(A), ~\mathcal{A}_n( A ,s) = \int_0^s    \sum_{i=0}^{n-1}  \mathbf{1}_{\{S_{i+1}\geq u\}} \mathbf{1}_{\{Z_i\in A\}} ~\! \lambda(Z_i ,u)   \ud u ,$$
is the compensator of the counting process $N_n(A,\cdot)$. In order to prove that $\widetilde{M}_n(A,\cdot)$ is a martingale, one may only state that
$$ \esp_{\nu_0} \left[ \int_0^t \Big(Y_n(A,s)^+\Big)^2 \ud \mathcal{A}_n( A ,s) \right] < +\infty .$$
Recall that $\lambda$ is bounded on the set $A\times[0,t^\star(A)[$ on the strength of Lemma \ref{majorlambda}. $C$ denotes an upper bound of $\lambda$ on this set. Consequently,
\begin{eqnarray*}
\int_0^t \Big(Y_n( A ,s)^+\Big)^2 \ud \mathcal{A}_n( A ,s) &=& \sum_{i=0}^{n-1} \int_0^t \Big(Y_n( A ,s)^+\Big)^2  \mathbf{1}_{\{S_{i+1}\geq s\}} \mathbf{1}_{\{Z_i\in A\}} \lambda(Z_i,s) \ud s \\
~&\leq& \int_0^t  \Big(Y_n( A ,s)^+\Big)^2 C \sum_{i=0}^{n-1}  \mathbf{1}_{\{S_{i+1}\geq s\}} \mathbf{1}_{\{Z_i\in A\}} \ud s,
\end{eqnarray*}
Furthermore, by $(\ref{eq:Y})$ and $(\ref{invY})$,
\begin{equation}
\int_0^t \Big(Y_n( A ,s)^+\Big)^2 \ud \mathcal{A}_n( A ,s)  \leq  C \int_0^t Y_n( A ,s)^+ \ud s .		\label{major:crochet}
\end{equation}
As $Y_n(A,\cdot)^+$ is bounded by $1$ by $(\ref{ynplus})$, we have
 \begin{eqnarray*}
 \int_0^t \Big(Y_n( A ,s)^+\Big)^2 \ud \mathcal{A}_n( A ,s)  &\leq& C t .
 \end{eqnarray*}
Hence,
$$ \esp_{\nu_0}\left[ \int_0^t \Big(Y_n( A ,s)^+\Big)^2 \ud\mathcal{A}_n( A ,s) \right] < +\infty .$$
This states that $\widetilde{M}_n(A,\cdot)$ is a martingale. The predictable variation of $\widetilde{M}_n(A,\cdot)$ is given by,
$$
\forall 0\leq s<t^\star(A),~<\widetilde{M}_{n}( A )>(s) = 		  \int_0^s \Big(Y_{n}( A ,u)^+\Big)^2 \ud \mathcal{A}_n(A,u)  .
$$
Therefore, by $(\ref{major:crochet})$,
$$\forall 0\leq s <t^\star(A),~<\widetilde{M}_{n}( A )>(s)  \leq C\int_0^s Y_{n}( A ,u)^+ \ud u .$$
According to Lemma \ref{S003}, $Y_n(A,\cdot)^+$ is bounded by $1$ and for any $u$, $Y_n(A,u)^+$ almost surely tends to $0$. Thus, by Lebesgue's dominated convergence theorem,
$$  <\widetilde{M}_{n}( A )>(s)  \longrightarrow 0 ~\textit{$\prob_x$-a.s.}~\text{when $n\to+\infty$}.$$
This achieves the proof.
\end{proof}
% -------------------------------------------------------------------

\noindent
We stated this lemma in order to apply Lenglart's inequality to the martingale $\widetilde{M}_n(A,\cdot)$. A reference about this inequality for continuous-time martingales may be found in \cite{And}, II.5.2.1. Lenglart's inequality.

\begin{rem}	\label{rem:lenglart}
In the light of Lenglart's inequality, the previous lemma directly induces that for any $A\in\mathcal{B}_\nu^+$, $0\leq t<t^\star(A)$, and $x\in E$,
$$ \sup_{0\leq s\leq t} \Big|\widetilde{M}_n(A,s)\Big| \stackrel{\prob_x~}{\longrightarrow} 0 .$$
\end{rem}

Let $A\in\mathcal{B}_{\nu}^+$. We propose $\widehat{L}_n(A,\cdot)$ as an estimator of the function $L(A,\cdot)$ defined by $(\ref{eq:L})$. It is given by,
\begin{equation}		\label{def:Lchapeau}
\forall 0\leq t<t^\star(A), ~\widehat{L}_{n}(A ,t) 		=	\int_0^t   Y_n(A,s)^+ \ud N_n(A,s) .
\end{equation}
$\widehat{L}_n(A,\cdot)$ is a Nelson-Aalen type estimator of $L(A,\cdot)$. The following results are related to its asymptotic behavior. We shall see that smoothing this estimator provides an estimator of $l(A,\cdot)$. Before, let us introduce this notation, for each $n\geq0$,
\begin{equation} \label{eq:Letoile}
\forall 0\leq t<t^\star(A), ~ {L}_{n}^\ast   (A,t) 	=	\int_0^t l(A , s)  \mathbf{1}_{\{Y_{n} ( A , s) > 0\}} \ud s  .
\end{equation}
We recall that $Y_n(A,\cdot)$ and its generalized inverse have already been defined by $(\ref{eq:Y})$ and $(\ref{invY})$.

		% =============================================================================	PROPOSITION
		\begin{prop1} 																			\label{pppaaa}
		Let $A\in\mathcal{B}_\nu^+$, $0<t<t^\star(A)$ and $x\in E$. Then,
		$$
		\sup_{0\leq s \leq t}	\Big| \widehat{L}_n(A,s) - L_n^\ast(A,s) \Big| \stackrel{\mathbf{P}_{x}~}{\longrightarrow} 0 ,
		 $$
	 	when $n$ goes to infinity.
		\end{prop1}
		% =============================================================================

% --------------------------------------------------------------------
\begin{proof}The definition of $\widetilde{M}_n(A,s)$ $(\ref{pms:def:Mtilde})$, the expression of $M_n(A,s)$ $(\ref{pms:expr:Mn})$ and the definition of $\widehat{L}_n(A,s)$ $(\ref{def:Lchapeau})$ yield to
$$
\widetilde{M}_{n}( A  , s)	= \widehat{L}_n(A,s) - \int_0^s Y_n(A,u)^+ \lambda(Z_i,u)  \mathbf{1}_{\{Z_i\in A\}} \mathbf{1}_{\{S_{i+1}\geq u\}} \ud u.
$$
Thus, $\widetilde{M}_{n}( A  , s)$ may be written in the following way.
\begin{equation} \label{hop}
\widetilde{M}_{n}( A  , s) = \widehat{L}_{n}( A  ,s) - L_n^\ast(A,s) - a_n(s),
\end{equation}
where the extra-term $a_n(s)$ is given by
\begin{equation} \label{def:ans}
a_n(s)	= \int_0^s  Y_n(A,u)^+ \sum_{i=0}^{n-1} \big[   \lambda(Z_i ,u) - l( A  ,u) \big] \mathbf{1}_{\{Z_i\in A\}} \mathbf{1}_{\{S_{i+1}\geq u\}}    \ud u  .
\end{equation}
Therefore,
$$ 
\sup_{0\leq s \leq t}	\Big| \widehat{L}_n(A,s) - L_n^\ast(A,s) \Big| \leq \sup_{0\leq s \leq t} \big| \widetilde{M}_n(A,s) \big| + \sup_{0\leq s \leq t} \big| a_n(s)\big| .
$$
We have already stated in Remark \ref{rem:lenglart} that $\displaystyle\sup_{0\leq s \leq t} |\widetilde{M}_n(A,s)|$ tends in probability to $0$. As a consequence, we only need to study the limit of $\displaystyle\sup_{0\leq s \leq t} \big|a_n(s)\big|$. With $(\ref{eq:Y})$, we have
$$
 \big|a_n(s)\big|   \leq	\int_0^t \mathbf{1}_{\{Y_n(A,u)>0\}}  \Bigg| Y_n(A,u)^+ \sum_{i=0}^{n-1} \lambda(Z_i ,u)  \mathbf{1}_{\{Z_i\in A\}} \mathbf{1}_{\{S_{i+1}\geq u\}} -  l(A,u) \Bigg| \ud u .
$$
The integrated function almost surely converges to $0$ on the strength of Proposition \ref{defl}. Furthermore, in the light of Lemma \ref{majorlambda}, there exists a real number $C>0$, which is an upper bound of $\lambda$ on $A\times[0,t^\star(A)[$. Hence, for $u\leq t$,
\begin{eqnarray*}
\Bigg| Y_n(A,u)^+ \sum_{i=0}^{n-1}\lambda(Z_i ,u)  \mathbf{1}_{\{Z_i\in A\}} \mathbf{1}_{\{S_{i+1}\geq u\}} -  l(A,u) \Bigg|  &\leq& \big| l(A,u)\big| + C \\
~&\leq& K_t(A) + C ,
\end{eqnarray*}
where $K_t(A)$ has already been defined by $(\ref{Kt})$. As a consequence, we apply Lebesgue's dominated convergence theorem, and we obtain
$$ \lim_{n\to+\infty} \sup_{0\leq s\leq t} \big| a_n(s)\big|    =   0 \quad \textit{$\prob_{x}$-a.s.,}$$
showing the result.
\end{proof}
% ----------------------------------------------------------------

		% =============================================================================	COROLLAIRE
		\begin{theo}																		\label{cor-cons}
		Let $A\in\mathcal{B}_\nu^+$, $0<t<t^\star(A)$ and $x\in E$. Then,
		$$ \sup_{0\leq s\leq t} \big| \widehat{L}_{n}(A,s) - L(A,s) \big|  \stackrel{\mathbf{P}_{x}~}{\longrightarrow} 0  ~\text{as $n\to+\infty$}.$$
		\end{theo}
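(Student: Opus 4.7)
The plan is to use a triangle inequality to split
\[
\sup_{0\leq s\leq t}\bigl|\widehat{L}_n(A,s)-L(A,s)\bigr|
\leq
\sup_{0\leq s\leq t}\bigl|\widehat{L}_n(A,s)-L_n^\ast(A,s)\bigr|
+\sup_{0\leq s\leq t}\bigl|L_n^\ast(A,s)-L(A,s)\bigr|.
\]
The first term already tends to $0$ in $\mathbf{P}_x$-probability by Proposition \ref{pppaaa}, so the whole task reduces to showing that the deterministic-looking (yet random through $Y_n$) second term vanishes.

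For the second term, by definitions \eqref{eq:L} and \eqref{eq:Letoile},
\[
L(A,s)-L_n^\ast(A,s)=\int_0^s l(A,u)\,\mathbf{1}_{\{Y_n(A,u)=0\}}\,\ud u,
\]
so
\[
\sup_{0\leq s\leq t}\bigl|L_n^\ast(A,s)-L(A,s)\bigr|
\leq \int_0^t \bigl|l(A,u)\bigr|\,\mathbf{1}_{\{Y_n(A,u)=0\}}\,\ud u
\leq K_t(A)\int_0^t \mathbf{1}_{\{Y_n(A,u)=0\}}\,\ud u,
\]
where $K_t(A)<+\infty$ by \eqref{Kt}. I would then invoke the last convergence in Lemma \ref{S003}, namely $\int_0^t \mathbf{1}_{\{Y_n(A,u)=0\}}\,\ud u \to 0$ $\mathbf{P}_x$-almost surely, which upgrades to convergence in probability.

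Combining the two bounds gives the claimed uniform convergence in probability. There is no genuine obstacle here: the only subtle point is to notice that the condition $t<t^\star(A)$ is needed in order to apply Lemma \ref{S003} uniformly on $[0,t]$ (since that lemma uses positivity of the limit of $Y_n/n$, which in turn requires $t<t^\star(A)$ through Lemma \ref{lem:ynsurn}) and that the boundedness of $l(A,\cdot)$ on $[0,t]$ comes from its continuity on $[0,t^\star(A)[$. With these observations, the proof is essentially a two-line triangle inequality followed by an application of the two previously established results.
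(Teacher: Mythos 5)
Your proposal is correct and follows exactly the same route as the paper's proof: a triangle inequality splitting off the Proposition \ref{pppaaa} term, then bounding $\sup_{0\leq s\leq t}|L_n^\ast(A,s)-L(A,s)|$ by $K_t(A)\int_0^t \mathbf{1}_{\{Y_n(A,u)=0\}}\,\ud u$ and invoking Lemma \ref{S003}.
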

		% =============================================================================
		
% -----------------------------------------------------------------------------
\begin{proof}
By the triangle inequality, we have
$$ \big| \widehat{L}_{n}(A,s) - L(A,s) \big| \leq  \big| \widehat{L}_{n}(A,s) - L^\ast_{n} (A,s) \big| +  \big| {L}^\ast_{n}(A,s) - L(A,s) \big| .$$
In the light of Proposition \ref{pppaaa}, we only need to show that $\sup_{0\leq s \leq t} \big| {L}^\ast_{n}(A,s) - L(A,s) \big|$ tends in probability to $0$. By $(\ref{Kt})$, $(\ref{eq:L})$ and $(\ref{eq:Letoile})$,
$$\sup_{0\leq s \leq t} \big| {L}^\ast_{n}(A,s) - L(A,s) \big|        \leq K_t(A) \int_0^t \mathbf{1}_{\{Y_{n}(A,r)  = 0\}} \ud r ,$$
and the bound almost surely converges to $0$ by Lemma \ref{S003}.
\end{proof}
% -------------------------------------------------------------------------

%%%
%%% ================================= V2
%%%
\noindent
The asymptotic normality of the estimator may be stated as follows.

		%%%
		\begin{prop1} \label{prop:clt}
		Let $A\in\mathcal{B}_\nu^+$, $0<t<t^\star(A)$ and $x\in E$. Assume that the rate of convergence in probability in $(\ref{def:petitl})$ is $o(n^{-1/2})$, that is
		\begin{equation}	\label{def:petitl_sqrtn}
		\sqrt{n}~\! \left( Y_n(A,t)^+~\! \sum_{i=0}^{n-1} \lambda(Z_i,t) \mathbf{1}_{\{Z_i\in A\}} \mathbf{1}_{\{S_{i+1}\geq t\}} - l(A,t) \right) \stackrel{\mathbf{P}_{x}~}{\longrightarrow}  0 ,
		 \end{equation}
		when $n$ goes to infinity. Then, we have the pointwise asymptotic normality. As $n$ goes to infinity,
		\begin{equation*} \label{clt}
		\sqrt{n}\left( \widehat{L}_n(A,t) - L(A,t)\right) \stackrel{\mathcal{D}}{\longrightarrow}~\! \mathcal{N}\left( 0  ,  \int_0^t \frac{l(A,s)}{\int_A G(z,s)\nu(\ud z)} \ud s\right).
		\end{equation*}
		\end{prop1}
		%%%

\begin{proof}
By $(\ref{hop})$, we have the following equation
$$ \sqrt{n}\left(\widehat{L}_n(A,t) - L(A,t)\right)  =  \sqrt{n} \widetilde{M}_n(A,t) + \sqrt{n} a_n(t) + \sqrt{n}\big( L_n^\ast(A,t) - L(A,t)\big) ,$$
where the continuous-time martingale $\widetilde{M}_n(A,\cdot)$ and the extra-term $a_n$ have already been defined by $(\ref{pms:def:Mtilde})$ and $(\ref{def:ans})$. From Assumption $(\ref{def:petitl_sqrtn})$ and similarly to the proof of Proposition \ref{pppaaa}, we state that $\sqrt{n} a_n(t)$ converges to $0$ in probability. Moreover, by $(\ref{lim1yn})$, $\mathbf{1}_{\{Y_n(A,0)=0\}}$ almost surely tends to $0$. This sequence takes its values on $\{0,1\}$, thus $\sqrt{n}\mathbf{1}_{\{Y_n(A,0)=0\}}$ tends to $0$ too. Since $Y_n(A,\cdot)$ is decreasing, we have
\begin{eqnarray*}
\sqrt{n}\big|L_n^\ast(A,t) - L(A,t)\big| &\leq& \sqrt{n} \int_0^t l(A,s) \mathbf{1}_{\{Y_n(A,s)=0\}}\ud s \\
~&\leq& \sqrt{n}\mathbf{1}_{\{Y_n(A,0)=0\}} \int_0^t l(A,s)\ud s .
\end{eqnarray*}
As a consequence, the term $\sqrt{n}(L_n^\ast(A,t) - L(A,t))$ almost surely converges to $0$. In addition, on the strength of Theorem IV.1.2 of \cite{And}, we have
$$ \sqrt{n}~\!\widetilde{M}_n(A,t) \stackrel{\mathcal{D}}{\longrightarrow}~\! \mathcal{N}\left( 0~\!,  \int_0^t \frac{l(A,s)}{\int_A G(z,s)\nu(\ud z)} \ud s\right) .$$
This states the result.
\end{proof}

\begin{rem}
The hypothesis $(\ref{def:petitl_sqrtn})$ looks like the assumptions A3 or A4 in \cite{MR1345201}, under which the authors state the asymptotic normality of their estimators.
\end{rem}

%%%
%%% ============================================================
%%%

Let $A\in\mathcal{B}_\nu^+$. We have provided an estimator of the function $L(A,\cdot)$ on the interval $[0,t^\star(A)[$ (see Theorem \ref{cor-cons}). We shall prove that $L(A,\cdot)$ and the cumulative rate $\Lambda(\xi,\cdot)$, defined by $(\ref{pms:def:LAMBDA})$, are close for any $\xi\in A$ if $A$ is small enough. In the same way, we state that $l(A,\cdot)$, given by $(\ref{def:petitl})$, and the jump rate $\lambda(\xi,\cdot)$ are close for any $\xi\in A$.

		% ============================================================================= LEMME
		\begin{lem1}																				\label{txlip}
		Let $A\in\mathcal{B}_\nu^+$, $z\in A$ and $0\leq s<t^\star(A)$,
			\begin{eqnarray*}
				\big| \lambda(z,s) - l( A ,s) \big| 		&\leq&			 [\lambda]_{Lip} ~ \text{\normalfont diam}~\!  A  ,\\
				 \big| \Lambda(z,s) - L( A ,s) \big|	& \leq&			 s~\! [\lambda]_{Lip} ~ \text{\normalfont diam}~\!  A .
			\end{eqnarray*}
		\end{lem1}
		% =============================================================================

% --------------------------------------------------------------------------------
\begin{proof}First, we have
\begin{eqnarray*}
\big| \lambda(z,s) - l( A ,s) \big|
~&=& \left|  \lambda(z,s) - \frac{ \int_{ A } f(\xi,s) \nu(\ud\xi)}{ \int_{ A } G(\xi,s) \nu(\ud\xi)} \right| \\
~&\leq& \frac{1}{ \int_{ A } G(\xi,s) \nu(\ud \xi) }  \int_{ A }  \big|   \lambda(z,s) - \lambda(\xi,s)   \big| G(\xi,s) \nu(\ud \xi)  \\
~&\leq& [\lambda]_{Lip} ~ \text{diam}~\! A .
\end{eqnarray*}
Moreover, integrating the previous result leads to $|\Lambda(z,s)-L( A ,s)|\leq s~\! [\lambda]_{Lip} ~ \text{\normalfont diam}~\!  A$.
\end{proof}
% --------------------------------------------------------------------------

Notice that we can define and estimate the function $L( A ,\cdot)$ only when $\nu( A )$ is strictly positive. Therefore, we need to estimate the indicator function $\mathbf{1}_{\{\nu( A )>0\}}$. For this, we estimate the quantity $\nu( A )$ by its empirical version,
$$\widehat{\nu}_n ( A ) =  \frac{1}{n} \sum_{i=0}^{n-1} \mathbf{1}_{\{Z_i \in  A \}} .$$

		% =============================================================================	LEMME
		\begin{lem1}
		\label{o01}
		By virtue of the ergodic theorem, $\widehat{\nu}_n( A )\to\nu( A )$ almost surely. Furthermore,
		$$ \mathbf{1}_{\{\widehat{\nu}_n( A ) > n^{-1/2} \}} \to  \mathbf{1}_{\{\nu( A )>0\}}  ~\text{$\prob_{x}$-\textit{a.s.} when $n \to+\infty$} ,$$
		for any $x\in E$.
		\end{lem1}
		% =============================================================================

% ------------------------------------------------------------------------------
\begin{proof}Let us distinguish the cases $\nu(A)>0$ and $\nu(A)=0$.
\begin{enumerate}[(i)]
\item If $\nu( A )>0$, $\widehat{\nu}_n( A )$ has an almost sure limit which is strictly positive. Thus, for $n$ large enough, $\widehat{\nu}_n( A ) > n^{-1/2} $ almost surely.
\item If $\nu( A )=0$, the number of visits in $A$ of the Markov chain $(Z_n)_{n\geq0}$ is almost surely finite, because this Markov chain is Harris-recurrent. In this case, the sum $\sum_{i=0}^{n-1} \mathbf{1}_{\{Z_i\in A \}}$ almost surely converges to a finite sum. \qedhere
\end{enumerate}
\end{proof}
% --------------------------------------------------------------------------

\noindent
In the following remark, we focus on the asymptotic behavior of $\displaystyle\sup_{0\leq s\leq t} \widehat{L}_n(A,s)$ when $\nu(A)=0$. Indeed, we stated that the convergence of the estimator holds only when $\nu(A)>0$.

% ----------------------------------------------------------------------------------------------------------------------- REMARQUE
\begin{rem} \label{nuzero}
Let $A\in\mathcal{B}(E)$ such that $\overline{A}\cap\partial E=\emptyset$ and $\nu(A)=0$. Let $0\leq s\leq t<t^\star(A)$ and $x\in E$. By $(\ref{def:Lchapeau})$, we have
$$\widehat{L}_n(A,s) \leq \sum_{i=0}^{n-1} Y_n(A,S_{i+1})^+ \mathbf{1}_{\{Z_i\in A\}}~\prob_x\textit{-a.s.}$$
Together with $(\ref{ynplus})$, we obtain
$$\sup_{0\leq s\leq t}\widehat{L}_n(A,s) \leq \sum_{i=0}^{n-1} \mathbf{1}_{\{Z_i\in A\}} ~\prob_x\textit{-a.s.}$$
In addition, the number of visits in $A$ is almost surely finite since $\nu(A)=0$. Thus, $\displaystyle\sup_{0\leq s\leq t} \widehat{L}_n(A,s)$ almost surely tends to a finite sum.
\end{rem}
% -----------------------------------------------------------------------------------------------------------------------

For estimating $\Lambda$ on $\mathcal{K}\times [0,t]$ where $\mathcal{K}$ is a compact subset of $E$, one considers a thin enough partition $(A_k)$ of $\mathcal{K}$ and one estimates $\Lambda(\xi,s)$ by $\widehat{L}_n(A_j,s)$, for $\xi\in A_j$.
		
		% ========================================================================= THEOREME
		\begin{theo}
		\label{est-sim-final}
		Let $\mathcal{K}$ be a compact subset of $E$ and $\xi\in E$. For any $\varepsilon, \eta>0$, there exist an integer $N$ and a finite partition $P=(A_k)$ of $\mathcal{K}$, such that for any $n\geq N$, for any $0<t<\min_k t^\star(A_k)$,
		$$
		\prob_{\xi}\Big(  \sup_{x\in\mathcal{K}} \sup_{0\leq s\leq t} \Bigg|      \sum_{k=1}^{|P|}   \widehat{L}_{n}( A _k , s)   \mathbf{1}_{\{  \widehat{\nu}_n( A _k)>\frac{1}{\sqrt{n}}\}} \mathbf{1}_{\{x\in A _k\}}   ~-~\Lambda			(x,s) \mathbf{1}_{\{x \in \mathcal{K}'\}} \Bigg|     > \eta \Big) < \varepsilon ,
		$$
		where $\mathcal{K}'$ is defined by
		$$ \mathcal{K}'  ~=  \bigcup_{\nu( A _k)>0}    A _k .$$
		\end{theo}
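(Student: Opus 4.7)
The strategy is to reduce the supremum over $x \in \mathcal{K}$ to a maximum over the finitely many cells of a well-chosen partition, and then to bound the deviation cell by cell via a triangle inequality combining four earlier results: the deterministic bias estimate in Lemma \ref{txlip}, the probabilistic consistency of $\widehat{L}_n(A,\cdot)$ in Theorem \ref{cor-cons}, the indicator convergence in Lemma \ref{o01}, and the almost-sure boundedness provided by Remark \ref{nuzero}. Since $\mathcal{K}$ is compact in the open set $E$, any finite partition $(A_k)_{k=1}^{K}$ of $\mathcal{K}$ by relatively compact sets automatically satisfies $\overline{A_k} \cap \partial E = \emptyset$, and Lemma \ref{r001} then gives $t^\star(A_k) > 0$ for every $k$. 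I would first choose such a partition with $\max_k \text{diam}(A_k)$ small enough that Lemma \ref{txlip} yields $|L(A_{k(x)}, s) - \Lambda(x, s)| < \eta / 2$ uniformly in $x \in \mathcal{K}$ and $s$ in the admissible range $[0, t]$.

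For each $x \in \mathcal{K}$ let $k(x)$ denote the unique index with $x \in A_{k(x)}$; the inner sum then collapses to $\widehat{L}_n(A_{k(x)}, s) \, \mathbf{1}_{\{\widehat{\nu}_n(A_{k(x)}) > n^{-1/2}\}}$, and $\mathbf{1}_{\{x \in \mathcal{K}'\}} = \mathbf{1}_{\{\nu(A_{k(x)}) > 0\}}$. Applying the elementary bound $|ab - cd| \leq |a - c|\, d + a\, |b - d|$ with $a = \widehat{L}_n(A_{k(x)}, s)$, $b = \mathbf{1}_{\{\widehat{\nu}_n(A_{k(x)}) > n^{-1/2}\}}$, $c = \Lambda(x, s)$, $d = \mathbf{1}_{\{\nu(A_{k(x)}) > 0\}}$ (valid since $a, d \geq 0$) splits the deviation into two pieces. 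The first piece, $|\widehat{L}_n(A_{k(x)}, s) - \Lambda(x, s)|\, \mathbf{1}_{\{\nu(A_{k(x)}) > 0\}}$, is only active on cells with $\nu(A_k) > 0$ and is majorized by $|\widehat{L}_n - L| + |L - \Lambda|$: the first summand tends to $0$ in probability uniformly in $s \leq t$ by Theorem \ref{cor-cons}, and the second is at most $\eta/2$ by the choice of partition. The second piece, $\widehat{L}_n(A_{k(x)}, s)\, \big|\mathbf{1}_{\{\widehat{\nu}_n(A_{k(x)}) > n^{-1/2}\}} - \mathbf{1}_{\{\nu(A_{k(x)}) > 0\}}\big|$, is handled by Lemma \ref{o01}: on cells with $\nu(A_k) > 0$ the indicator discrepancy is almost surely eventually zero, while on cells with $\nu(A_k) = 0$ the indicator converges to $0$ almost surely and, thanks to Remark \ref{nuzero}, $\sup_{0 \leq s \leq t} \widehat{L}_n(A_k, s)$ stays bounded by a finite sum on the underlying almost-sure event, so the product still tends to $0$.

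Since the partition is finite, $\sup_{x \in \mathcal{K}}$ is merely a maximum over the $K$ cells, and a union bound over $k$ lets me pick $N$ large enough that with $\prob_\xi$-probability at least $1 - \varepsilon$ each cell's contribution is at most $\eta / 2$, yielding the claim. The main technical obstacle is the treatment of the cells with $\nu(A_k) = 0$, on which Theorem \ref{cor-cons} cannot be invoked since $L(A_k, \cdot)$ is not defined in the framework of Proposition \ref{defl}. The threshold $n^{-1/2}$ in the indicator is precisely calibrated so that, combined with the Harris-recurrence-based boundedness of Remark \ref{nuzero}, the contribution of such cells vanishes almost surely rather than blowing up.
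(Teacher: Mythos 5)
Your proof is correct and takes essentially the same route as the paper's: the same partition argument, the same case split on $\nu(A_k)>0$ versus $\nu(A_k)=0$, and the same four ingredients (Lemma \ref{txlip}, Theorem \ref{cor-cons}, Lemma \ref{o01}, Remark \ref{nuzero}). The only cosmetic difference is that you organize the cell-by-cell decomposition via the algebraic bound $|ab-cd|\le|a-c|\,d+a\,|b-d|$, whereas the paper first discards the cells with $\nu(A_k)=0$ and then applies a three-term triangle inequality on the remaining cell; the two bookkeeping schemes produce the same three error terms, and your explicit union bound over the finitely many cells is slightly cleaner than the paper's closing ``since the sum is finite.''
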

		% ==========================================================================
		
% --------------------------------------------------------------------------------------------------------------------------------------------------------------------
\begin{proof}Let us fix $s$ and $x$. We have
\begin{align*}
\sum_{k=1}^{|P|} &  \widehat{L}_{n}( A _k , s)   \mathbf{1}_{\{  \widehat{\nu}_n( A _k)>\frac{1}{\sqrt{n}}\}} \mathbf{1}_{\{x\in A _k\}} ~-~\Lambda			(x,s) \mathbf{1}_{\{x \in \mathcal{K}'\}} \\
&=\quad  \sum_{k=1}^{|P|}   \widehat{L}_{n}( A _k , s)   \mathbf{1}_{\{  \widehat{\nu}_n( A _k)>\frac{1}{\sqrt{n}}\}} \mathbf{1}_{\{x\in A _k\}}   ~-~
\sum_{k=1}^{|P|} \Lambda	(x,s) \mathbf{1}_{\{x \in A_k\}} \mathbf{1}_{\{\nu(A_k)>0\}} \\
&= \quad  \sum_{k=1}^{|P|}   \widehat{L}_{n}( A _k , s)   \mathbf{1}_{\{  \widehat{\nu}_n( A _k)>\frac{1}{\sqrt{n}}\}} \mathbf{1}_{\{x\in A _k\}} \mathbf{1}_{\{\nu(A_k)=0\}} \\
&\quad+~ \sum_{k=1}^{|P|} \Big(   \widehat{L}_{n}( A _k , s)   \mathbf{1}_{\{  \widehat{\nu}_n( A _k)>\frac{1}{\sqrt{n}}\}} - \Lambda	(x,s)\Big) \mathbf{1}_{\{x\in A_k\}}  \mathbf{1}_{\{\nu(A_k)>0\}} .
\end{align*}
Let us denote by $l$ the integer between $1$ and $|P|$ such that $x$ is in $A_l$. We distinguish the two cases $\nu(A_l)>0$ and $\nu(A_l)=0$. By assumption on $\mathcal{K}$, $A _l$ is a relatively compact set such that $\overline{A_l}\cap\partial E=\emptyset$.
\begin{enumerate}[(i)]

\item If $\nu(A_l)=0$, $\widehat{L}_n(A_l,s)$ almost surely and uniformly converges to a finite sum, by Remark \ref{nuzero}. Thus, by Lemma \ref{o01},
$$\sup_{0\leq s\leq t}\widehat{L}_n( A _l,s) \mathbf{1}_{\{\widehat{\nu}_n(A _l)>\frac{1}{\sqrt{n}}\}} \stackrel{\prob_x~}{\longrightarrow} 0 .$$

\item If $\nu(A_l)>0$. We have, by the triangle inequality,
\begin{eqnarray*}
\big| \widehat{L}_n( A _l,s) \mathbf{1}_{\{\widehat{\nu}_n( A _l)>\frac{1}{\sqrt{n}}\}} - \Lambda(x,s)  ~ \big| &\leq&\quad
\widehat{L}_n( A _l,s)  \big|   \mathbf{1}_{\{\widehat{\nu}_n( A _l)>n^{-1/2}\}}     -    1 \big|    \\
&~&+~ \big| \widehat{L}_n ( A _l ,s)      -   L( A _l,s)  \big|   \\
&~&+~\Big|   L( A _l,s)     -   \Lambda(x,s) \Big| .
\end{eqnarray*}
\begin{enumerate}
\item The first term uniformly tends to $0$ in probability, since $|\mathbf{1}_{\{\widehat{\nu}_n( A _l)>n^{-1/2}\}}-1|$ almost surely tends to $0$ by Lemma \ref{o01}, and because $\widehat{L}_n( A _l,\cdot)$ uniformly converges in probability to $L( A _l,\cdot)$, according to Theorem \ref{cor-cons}.
\item The second term uniformly tends to $0$ in probability on the strength of Theorem \ref{cor-cons}.
\item The third term is bounded by $t [\lambda]_{Lip} \max_k \text{diam}~\!  A _k$, according to Lemma \ref{txlip}, which does not depend on $x$ and is arbitrarily small.
\end{enumerate}
Since the sum is finite, this achieves the proof. \qedhere
\end{enumerate}
\end{proof}
% ----------------------------------------------------------------------------------------------------------------------------------

\begin{rem}
It appears difficult to derive the asymptotic pointwise normality for estimating $\Lambda(\xi,t)$ from the previous results. Indeed, the asymptotic behavior of the difference $\sqrt{n}(\widehat{L}_n(A,t) - \Lambda(\xi,t))$ may be investigated through $\sqrt{n}(\widehat{L}_n(A,t) - L(A,t))$ and $ \sqrt{n}(L(A,t) - \Lambda(\xi,t))$. For the former, we already obtained the asymptotic normality in Proposition \ref{prop:clt}. The main difficulty comes from the later, which goes to plus or minus infinity. A way to overcome this drawback would be to consider a partition depending on $n$. This approach would lead to several technical difficulties and remains an open problem for the authors. Nevertheless, we would like to emphasize that the result presented in Proposition \ref{prop:clt} provides an approximation of $\Lambda(\xi,t)$ with the desired accuracy. In particular, we could obtain from this result an asymptotic confidence interval for $\Lambda(\xi,t)$ with a given confidence level.
\end{rem}

\subsection{Estimation of $\lambda$} In this part, we focus on smoothing the estimator $\widehat{L}_n(A,\cdot)$ of $L(A,\cdot)$ in order to provide a consistent estimator of $l(A,\cdot)$ (see Proposition \ref{kernel}), and, therefore, of $\lambda$ (see Theorem \ref{pms:theo:CVfin}). Indeed, we shall state another corollary of Proposition \ref{pppaaa}, which deals with the smoothing of $\widehat{L}_{n}(A,\cdot)$ by some kernel methods.

Let $K$ be a continuous kernel with support $[-1,1]$. Let us introduce the following notations. For any real number $b>0$ and $0<t<t^\star(A)$, we denote,
\begin{equation}\label{def:hatl}
\forall 0\leq u \leq t, ~ \widehat{l}_{n,b,t} (A,u) = \frac{1}{b} \int_0^{t} K \Big( \frac{u-s}{b} \Big) \ud \widehat{L}_{n}(A,s) .
\end{equation}
This is an estimator of $l(A,\cdot)$ within the interval $[0,t]$. Furthermore, we denote also,
$$\forall 0\leq u\leq t, ~ l_{n,b,t}^\ast(A,u)        =   \frac{1}{b} \int_0^{t} K \Big( \frac{u-s}{b} \Big) \ud {L}_{n}^\ast(A,s) .$$
We first prove the following lemma.

%===============================================================
\begin{lem1}	\label{lem:zzz}
Let $A\in\mathcal{B}_\nu^+$, $b>0$ and $0<r<t<t^\star(A)$. 
$$ 
\sup_{0\leq s \leq r} \big| \widehat{l}_{n,b,t} ( A ,s) - l_{n,b,t}^\ast ( A ,s)  \big| \leq \frac{2}{b} V(K) \sup_{0\leq s\leq t} \big| \widehat{L}_{n}( A ,s) - L_{n}^\ast( A ,s) \big| .
$$
\end{lem1}
% ===============================================================
\begin{proof}
Let us denote,
$$\forall 0\leq s\leq t, ~g(s) = \widehat{L}_{n}( A ,s) - L_{n}^\ast( A ,s) .$$
Thus, by an integration by parts and since $g(0)=0$,
$$ K\big( (s-t)/b \big) g(t) =  \int_0^t K\big( (s-u)/b\big) \ud g(u) - \int_0^{t} g(u-) \ud K\big((s-u)/b\big) .$$
We deduce from the triangle inequality,
$$ b~\! \sup_{0\leq s \leq r}   \big| \widehat{l}_{n,b,t} ( A ,s) - l_{n,b,t}^\ast ( A ,s)  \big| \leq \sup_{0\leq s\leq r} \big| K\big( (s-t)/b \big) g(t) \big|   + \sup_{0\leq s\leq r} \int_0^{t} g(u-) \ud K\big( (s-u)/b \big).$$
For any $v\notin[-1,1]$, $K(v)=0$. This induces that
\begin{eqnarray*}
\big| K\big( (s-t)/b \big) g(t) \big|   	& \leq &    \sup_{0\leq u\leq t} |g(u)|  K\big( (s-t)/b \big) \\
							~ & = &  	\sup_{0\leq u\leq t} |g(u)|  ~\! \Big\{ K\big( (s-t)/b \big)-K(v)\Big\} \\
							~ & \leq &	\sup_{0\leq u\leq t} |g(u)|  ~\! V(K) .
\end{eqnarray*}
Furthermore,
\begin{eqnarray*}
\int_0^{t} g(u-) \ud K\big( (s-u)/b \big) &=& \lim_{\max\{s_{i+1}-s_i\} \to 0} ~  \sum_{i=1}^p g(s_i) ~\! \Big\{ K\big( (s - s_{i+1})/b\big) - K\big( (s-s_i)/b\big)  \Big\} \\
~&\leq& \sup_{0\leq u\leq t} |g(u)| V(K) ,
\end{eqnarray*}
giving the proof.
\end{proof}
% ===============================================================

		% =============================================================	PROPOSITION
		\begin{prop1}															\label{kernel}
		Let $A\in\mathcal{B}_\nu^+$ and $0<r_1<r_2<t<{t^\star( A )}$. There exists a sequence $(\beta_n)_{n\geq0}$, which almost surely tends to $0$, such that
		$$  \sup_{r_1\leq s \leq r_2}   \big| \widehat{l}_{n,\beta_n,t} ( A ,s) - l( A ,s)  \big|   \stackrel{\mathbf{P}_{x}~}{\longrightarrow} 0 ~\text{when $n \to+\infty$} ,$$
		for any $x\in E$.
		\end{prop1}
		% ============================================================

% -------------------------------------------------------------------------------------------------
\begin{proof}
By the triangle inequality, we have for any sequence $(b_n)_{n\geq0}$,
\begin{equation}	\label{eq:itn}
\big| \widehat{l}_{n,b_n,t} ( A ,s) - l( A ,s)  \big|      \leq     \big| \widehat{l}_{n,b_n,t} ( A ,s) - l_{n,b_n,t}^\ast ( A ,s)  \big|    +     \big| {l}^\ast_{n,b_n,t} ( A ,s) - l ( A ,s)  \big| .
\end{equation}
We consider the sequence $(b_n)_{n\geq0}$ defined by,
$$\forall n\geq0,~b_n = \sqrt{ \sup_{0\leq s\leq t} \big| \widehat{L}_{n}(A,s) - L_n^\ast(A,s) \big|} .$$
On the strength of Proposition \ref{pppaaa}, the sequence $(b_n)_{n\geq0}$ tends in probability to $0$, and we have
$$  \sup_{0\leq s\leq t}
	\Big| \widehat{L}_{n}( A ,s)-L_{n}^\ast( A ,s) \Big|
 \stackrel{\prob_{x}}{=} o(b_n) ~\text{when $n \to +\infty$.}$$
Finally, according to Lemma \ref{lem:zzz},
\begin{equation} \label{eq:mmm}
\sup_{r_1\leq s\leq r_2} \big| \widehat{l}_{n,b_n,t} ( A ,s) - l_{n,b_n,t}^\ast ( A ,s)  \big|  \stackrel{\mathbf{P}_{x}~}{\longrightarrow} 0 ~\text{when $n \to +\infty$} .
\end{equation}
Now, we focus on the asymptotic behavior of $\big|{l}^\ast_{n,b_n,t} ( A ,s) - l ( A ,s)\big|$. By the change of variable $s-b_n v = u$, we have
$$ {l}^\ast_{n,b_n,t} ( A ,s) 		=			 \int_{(s-t)/b_n}^{s/b_n} K(v) \mathbf{1}_{\{Y_{n}( A ,s-b_n v)>0\}} l( A ,s-b_n v) \ud v     .$$
As $(b_n)_{n\geq0}$ tends in probability to $0$, there exists a subsequence
$$(\beta_n)_{n\geq0} =(b_{\alpha(n)})_{n\geq0},$$
which almost surely converges to $0$. Let
\begin{equation}	\label{omega1}
\Omega_1 = \big\{ \omega\in\Omega ~:~ \beta_n(\omega) \to 0~\text{and}~\mathbf{1}_{\{Y_n(A,r_2,\omega) = 0\}} \to 0~\!\text{when $n\to+\infty$}\big\} .
\end{equation}
According to foregoing and by $(\ref{lim1yn})$, $\prob_x(\Omega_1) = 1$. Let $\omega\in\Omega_1$ and $N$ such that, for any $n\geq N$,
$$r_1/\beta_n(\omega) \geq 1\qquad\text{and}\qquad (r_2-t)/\beta_n(\omega) \leq -1.$$
For any $n\geq N$, we have
$$
\frac{s}{\beta_n(\omega)}\geq \frac{r_1}{\beta_n(\omega)}\geq 1\qquad\text{and}\qquad \frac{s-t}{\beta_n(\omega)} \leq\frac{r_2-t}{\beta_n(\omega)}\leq-1.
$$
Thus,
$$ {l}^\ast_{n,\beta_n(\omega),t} ( A ,s,\omega)  = \int_{-1}^1 K(v) \mathbf{1}_{\{Y_{n}( A ,s-\beta_n(\omega) v , \omega)>0\}} l( A ,s-\beta_n(\omega) v) \ud v    ,$$
because the support of $K$ is $[-1,1]$. Hence,
\begin{align}
\big|  {l}^\ast_{n,\beta_n(\omega),t} &( A ,s,\omega) - l ( A ,s)  \big|	\nonumber \\
&\leq  \quad  \Big|   \int_{-1}^1 K(u) \Big(  l( A ,s-\beta_n(\omega) u)  -  l( A ,s) \Big) \ud u  \Big|  \nonumber  \\
&\quad+ \Big|  \int_{-1}^1  K(v) \Big(\mathbf{1}_{\{Y_{n}( A ,s-\beta_n(\omega) v , \omega)>0\}}-1\Big) l( A ,s-\beta_n(\omega) v) \ud v \Big|. \label{eq:nnn}
\end{align}
We shall prove that the terms
$$\sup_{r_1\leq s\leq r_2} \Big|   \int_{-1}^1 K(u) \Big(  l( A ,s-\beta_n(\omega) u)  -  l( A ,s) \Big) \ud u  \Big|$$
and
$$ \sup_{r_1\leq s\leq r_2} \Big|  \int_{-1}^1  K(v) \Big(\mathbf{1}_{\{Y_{n}( A ,s-\beta_n(\omega) v , \omega)>0\}}-1\Big) l( A ,s-\beta_n(\omega) v) \ud v \Big|$$
converge to $0$ for any $\omega\in\Omega_1$. We shall begin by the second term.
\begin{enumerate}[(i)]
\item Recall that $K$ and $l(A,\cdot)$ are two continuous functions. Let $C$ defined by
$$C = K_t(A) \sup_{-1\leq v\leq 1} K(v) ,$$
where $K_t(A)$ has already been defined by $(\ref{Kt})$. Then, we have
\begin{align}
\Big|  \int_{-1}^1  K(v) \Big(\mathbf{1}_{\{Y_{n}( A ,s-\beta_n(\omega) v , \omega)>0\}}&-1\Big) l( A ,s-\beta_n(\omega) v) \ud v \Big| \nonumber  \\
& \leq C \int_{-1}^1  \big|   \mathbf{1}_{\{Y_{n}( A ,s-\beta_n(\omega) v , \omega)>0\}}-1   \big|   \ud v  \nonumber \\
& \leq C \int_{-1}^1  \mathbf{1}_{\{Y_{n}( A ,s-\beta_n(\omega) v , \omega) = 0\}}   \ud v . \label{pms:a001}
\end{align}
The change of variable $u=s-\beta_n(\omega) v$ yields to
\begin{eqnarray}
\int_{-1}^1  \mathbf{1}_{\{Y_{n}( A ,s-\beta_n(\omega) v , \omega) = 0\}}   \ud v &\leq&
\frac{1}{\beta_n(\omega)}   \int_{s-\beta_n(\omega)}^{s+\beta_n(\omega)}  \mathbf{1}_{\{Y_n(A,u,\omega)=0\}}  \ud u \nonumber \\
& \leq&  \frac{1}{\beta_n(\omega)}   \int_{r_2-\beta_n(\omega)}^{r_2+\beta_n(\omega)}  \mathbf{1}_{\{Y_n(A,u,\omega)=0\}}  \ud u ,\label{eq:cv001}
\end{eqnarray}
because $u\mapsto\mathbf{1}_{\{Y_n(A,u,\omega)=0\}}$ is an increasing function. Moreover, by definition of Riemann's integral of piecewise-continuous functions, we have when $n\to+\infty$,
$$
\int_{r_2-\beta_n(\omega)}^{r_2+\beta_n(\omega)}  \mathbf{1}_{\{Y_n(A,u,\omega)=0\}}  \ud u  ~\sim~  \beta_n(\omega) \Big(\mathbf{1}_{\{Y_n(A,r_2^-,\omega)=0\}} + \mathbf{1}_{\{Y_n(A,r_2^+,\omega)=0\}}\Big).
$$
Thus, when $n\to+\infty$,
\begin{equation}	\label{eq:cv002}
\frac{1}{\beta_n(\omega)}   \int_{r_2-\beta_n(\omega)}^{r_2+\beta_n(\omega)}  \mathbf{1}_{\{Y_n(A,u,\omega)=0\}}  \ud u  ~ \sim ~ \mathbf{1}_{\{Y_n(A,r_2^-,\omega)=0\}} + \mathbf{1}_{\{Y_n(A,r_2^+,\omega)=0\}} .
\end{equation}
Futhermore, by definition of $\Omega_1$ (see $(\ref{omega1})$),
\begin{equation} \label{eq:cv003}
 \mathbf{1}_{\{Y_n(A,r_2,\omega)=0\}} \to 0\quad\text{when $n\to+\infty$}.
\end{equation}
Finally, $(\ref{pms:a001})$, $(\ref{eq:cv001})$, $(\ref{eq:cv002})$ and $(\ref{eq:cv003})$ show that
$$ \sup_{r_1\leq s\leq r_2} \Big|  \int_{-1}^1  K(v) \Big(\mathbf{1}_{\{Y_{n}( A ,s-\beta_n(\omega) v , \omega)>0\}}-1\Big) l( A ,s-\beta_n(\omega) v) \ud v \Big|$$
tends to $0$ for almost all $\omega$, since $\prob_x(\Omega_1)=1$.
\item By virtue of Lebesgue's dominated convergence theorem, we have for any $\omega\in\Omega_1$,
$$  \sup_{r_1\leq s\leq r_2} \Big|   \int_{-1}^1 K(u) \Big(  l( A ,s-\beta_n(\omega) u)  -  l( A ,s) \Big) \ud u  \Big|   \longrightarrow 0 \quad \text{when $n\to+\infty$} ,$$
since $l(A,\cdot)$ is a continuous function.
\end{enumerate}
Finally, by $(\ref{eq:nnn})$, when $n$ goes to infinity,
$$ \sup_{r_1\leq s\leq r_2} \big| {l}^\ast_{n,\beta_n,t} ( A ,s) - l ( A ,s) \big| \to 0~\prob_{x}\textit{-a.s.}$$
Consequently, the convergence also holds in probability. Together with $(\ref{eq:itn})$ and $(\ref{eq:mmm})$, we obtain the expected result.
\end{proof}
% ----------------------------------------------------------------------------------------------------------------

\noindent
Proposition \ref{kernel} states the existence of a (random) sequence $(\beta_n)_{n\geq0}$ such that the estimator $\widehat{l}_{n,\beta_n,t}(A,\cdot)$ tends to $l(A,\cdot)$, within every compact subset $[r_1,r_2]$, with $0<r_1<r_2<t$ and $t<t^\star(A)$. Nevertheless, we do not obtain an explicit construction of this sequence. Furthermore, let us notice that the sequence $(\beta_n)_{n\geq0}$ depends on $t$.

Finally, for any $A\in\mathcal{B}_\nu^+$, we provide a consistent estimator $\widehat{l}_{n,\beta_n,t}( A ,\cdot)$ of $l( A ,\cdot)$ within the interval $[0,t]$ for any $0<t<t^\star(A)$. However, we are interested in the estimation of the jump rate $\lambda$. Since $\lambda$ is Lipschitz, if $A$ is small enough, then $l( A ,\cdot)$ and $\lambda(\xi,\cdot)$ are close for $\xi\in A$ (see Lemma \ref{txlip}). This leads to our main result of convergence.

		% ======================================================= THEOREME
		\begin{theo}
		\label{pms:theo:CVfin}
		Let $\mathcal{K}$ be a compact subset of $E$ and $\xi\in E$. For any $\varepsilon, \eta > 0$, there exist an integer $N$ and a finite partition $P=(A _k)$ of $\mathcal{K}$ such that, for any $0<t<\min_k t^\star(A_k)$, there exists for each $k$, a sequence $(\beta_n(A_k))_{n\geq0}$ (depending on $t$) which almost surely tends to $0$, such that for any $n\geq N$, for any $0<r_1<r_2<t$,
		$$\prob_{\xi}\Big(  \sup_{x\in\mathcal{K}} \sup_{r_1\leq s\leq r_2} \Bigg|      \sum_{k=1}^{|P|}   \widehat{l}_{n,\beta_n(A_k),t}( A _k , s)   \mathbf{1}_{\{\widehat{\nu}_n( A _k)>\frac{1}{\sqrt{n}}\}} \mathbf{1}_{\{x\in A _k\}}   ~-~\lambda(x,s) \mathbf{1}_{\{x \in \mathcal{K}'\}} \Bigg| > \eta \Big) < \varepsilon ,$$
		where $\mathcal{K}'$ is defined by
		$$ \mathcal{K}'  ~=  \bigcup_{\nu( A _k)>0}    A _k .$$
		\end{theo}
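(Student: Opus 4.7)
The proof closely parallels that of Theorem \ref{est-sim-final}, replacing the Nelson-Aalen type estimator $\widehat{L}_n$ by its kernel-smoothed version $\widehat{l}_{n,\beta_n,t}$, and the cumulative rate $\Lambda$ by the jump rate $\lambda$. The plan is to choose first a sufficiently fine partition $P=(A_k)$ of $\mathcal{K}$, then bound the summand corresponding to the cell containing $x$ via a triangle inequality, and control each resulting piece by Proposition \ref{kernel}, Lemma \ref{o01}, and Lemma \ref{txlip} respectively.

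Since $\mathcal{K}$ is compact in the open set $E$, one can choose a finite partition $P=(A_k)$ of $\mathcal{K}$ into relatively compact sets satisfying $\overline{A_k}\cap\partial E=\emptyset$, whose diameters are all smaller than $\eta/(3t[\lambda]_{Lip})$. By Lemma \ref{txlip} this ensures $|l(A_k,s)-\lambda(x,s)|<\eta/3$ whenever $x\in A_k$ and $s\leq t$. For a fixed $x\in\mathcal{K}$, let $l$ denote the unique index with $x\in A_l$; the indicators $\mathbf{1}_{\{x\in A_k\}}$ reduce the sum to a single term. One then splits into two cases.

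If $\nu(A_l)=0$, then $x\notin\mathcal{K}'$, so $\lambda(x,s)\mathbf{1}_{\{x\in\mathcal{K}'\}}=0$. By Lemma \ref{o01}, the $\{0,1\}$-valued sequence $\mathbf{1}_{\{\widehat{\nu}_n(A_l)>n^{-1/2}\}}$ tends almost surely to $0$, hence equals $0$ for all $n$ large enough, and the summand vanishes identically. If $\nu(A_l)>0$, then $A_l\in\mathcal{B}_\nu^+$ and the triangle inequality gives
\begin{align*}
\bigl|\widehat{l}_{n,\beta_n(A_l),t}(A_l,s)\mathbf{1}_{\{\widehat{\nu}_n(A_l)>n^{-1/2}\}}-\lambda(x,s)\bigr|
&\leq \widehat{l}_{n,\beta_n(A_l),t}(A_l,s)\bigl|1-\mathbf{1}_{\{\widehat{\nu}_n(A_l)>n^{-1/2}\}}\bigr| \\
&\quad+\bigl|\widehat{l}_{n,\beta_n(A_l),t}(A_l,s)-l(A_l,s)\bigr| \\
&\quad+\bigl|l(A_l,s)-\lambda(x,s)\bigr|.
\end{align*}
The third term is at most $\eta/3$ by the partition choice. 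The second term converges uniformly in probability to $0$ on $[r_1,r_2]$ by Proposition \ref{kernel}, which furnishes the sequence $(\beta_n(A_l))_{n\geq0}$. For the first term, the indicator tends almost surely to $1$ by Lemma \ref{o01}, while $\widehat{l}_{n,\beta_n(A_l),t}(A_l,\cdot)$ is close in probability to $l(A_l,\cdot)$, which is bounded by $K_t(A_l)$ on $[0,t]$; the product therefore vanishes in probability, uniformly on $[r_1,r_2]$.

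Because $P$ is finite, one can take the maximum of the ranks $N$ produced by the above convergences over the finitely many cells, and the intersection of the associated events, to obtain a uniform bound over $x\in\mathcal{K}$ and $s\in[r_1,r_2]$. The main obstacle lies in the case $\nu(A_l)=0$: unlike $\widehat{L}_n$ (cf.\ Remark \ref{nuzero}), the kernel-smoothed process $\widehat{l}_{n,\beta_n,t}$ admits no a priori bound due to the division by $\beta_n$, so one must rely crucially on the indicator $\mathbf{1}_{\{\widehat{\nu}_n(A_l)>n^{-1/2}\}}$ becoming identically zero for large $n$ to annihilate the term. A secondary subtlety is that Proposition \ref{kernel} only yields convergence on $[r_1,r_2]\subset(0,t^\star(A))$, which matches precisely the range of $s$ appearing in the statement.
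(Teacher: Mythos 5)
Your proof is correct and matches the paper's intended approach: the paper simply refers back to the proof of Theorem \ref{est-sim-final}, and you have faithfully adapted it, correctly noting that in the case $\nu(A_l)=0$ one must rely on the indicator $\mathbf{1}_{\{\widehat{\nu}_n(A_l)>n^{-1/2}\}}$ being eventually zero almost surely rather than on a boundedness argument, since no analogue of Remark \ref{nuzero} is available for the kernel-smoothed estimator. One small slip: since Lemma \ref{txlip} gives $|\lambda(z,s)-l(A,s)|\leq[\lambda]_{Lip}\,\mathrm{diam}\,A$ with no factor of $t$, the partition diameters should be smaller than $\eta/(3[\lambda]_{Lip})$, not $\eta/(3t[\lambda]_{Lip})$; dropping the $t$ is in fact necessary here, since in the statement the partition is fixed before $t$ is chosen.
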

		% ======================================================

% ------------------------------------------------------------------------------------
\begin{proof}
This is a consequence of Proposition \ref{kernel} and Lemma \ref{txlip}. The proof is similar to the one of Theorem \ref{est-sim-final}.
\end{proof}
% ------------------------------------------------------------------------------------

% ------------------------------------------------------------------------------------
\begin{rem}	\label{pms:rem:K}
If the compact subset $\mathcal{K}$ is close to $E$, then, for any partition $(A_k)$ de $\mathcal{K}$, the lower bound $t^\star(A_k)$ is small. In this case, one estimates the jump rate on a great part of the state space, but within a small time interval. Conversely, if one chooses a small compact subset $\mathcal{K}$, centered in $E$, one estimates the jump rate on a small part of the state space, but within a larger time interval.
\end{rem}
% ------------------------------------------------------------------------------------

		\section{Illustration}
		\label{s:simu}

%%%
%%%	V2

This section is devoted to a simulation study, in order to illustrate the convergence results stated in Theorems \ref{est-sim-final} and \ref{pms:theo:CVfin}. We shall present a numerical example which can be directly connected to the pratical application given in Section \ref{s:def}.

We consider here a non-homogeneous marked renewal process $(X_t)_{t\geq0}$ defined on the interval $E=]0,60[$ and starting from the point $30$. At time $t$, $X_t$ models the temperature configuration in degree Celsius of a production machine. The temperature is piecewise-constant over time. When a failure occurs, the maintainer instantaneously repairs the machine and changes the temperature configuration: he tries to obtain the normal regime at $20^\circ$C. However, he does a random error whose variance depends on the difference between the previous temperature and $20^\circ$C. We are interested in the estimation of the failure rate in the normal temperature configuration. The three characteristics $\lambda$, $Q$, and $t^\star$ of $(X_t)_{t\geq0}$ are given for any $x\in E$ by
\begin{itemize}
\item for any $A\in\calB(E)$,
$$Q(x,A) = \frac{1}{K_x}\int_A\mathbf{1}_E(y) \exp\left(-\frac{(y-20)^2}{2\sigma_x^2}\right),$$
with $\sigma_x=0.5+|x-20|$,
\item for any $t\geq0$, $\lambda(x,t) = 3+0.05 x$,
\item $t^\star(x)=1$,
\end{itemize}
where $K_x$ is the normalizing constant. This transition kernel obviously satisfies Doeblin's condition. Thus, Assumption \ref{hyp:ergodic} holds for the embedded chain $(Z_n)_{n\geq0}$. Moreover, the cumulative failure rate $\Lambda$ associated to $\lambda$ is given by $\Lambda(x,t) = (3+0.05x)\,t$.

We simulate long trajectories of the process: the observation of $200$, $300$ or $400$ jumps is available for estimating the failure rate $\lambda$ and its integrate version $\Lambda$. As mentioned before, we focus on the estimation of $\Lambda(x,t)$ and $\lambda(x,t)$ for $x=20$. We choose to approximate $\lambda(x,t)$ by the function $l(A,t)$, with $A=\{y\in E\,:\,|y-x|\leq2\}$. As $t^\star(A)=1$, we estimate $l(A,t)$ by $(\ref{def:hatl})$ which is an integral between times $0$ and $0.9$. Thus, the convergence result stated in Theorem \ref{pms:theo:CVfin} is valid on every compact subset in $]0,0.9[$. We present our simulation study over the interval $[0,0.8]$ for the estimation of $\Lambda$ and the compact subset $[0.2,0.8]\subset]0,0.9[$ for $\lambda$.

For each simulated trajectory, the number of visits in $A$ is close to $73.5\%$ of the number of observed jumps. In addition, the chosen bandwidth $\beta_n(A)$ can be written in the following way,
$$\beta_n(A)=\frac{1}{h_n(A)^\alpha} ,$$
where $h_n(A)$ denotes the (random) number of visits in $A$, and $\alpha=1/4$. Figure \ref{lab:fig1} presents the estimation of both the functions of interest from the observation of $400$ jumps. In addition, we focus on the evaluation of the error for different sample sizes: $200$, $300$ and $400$ observed jumps. For each sample size, we generate $100$ replicates of the model. We provide in Figure \ref{lab:fig2} the boxplots of the integrated square error between the functions of interest and their estimates. The integrated square error is not surprisingly decreasing with the sample size. Both these figures illustrate the good behavior of our estimation method.

We measure computational times for both simulating and estimating with our 1.86GHz Intel Core 2 Duo processor. The simulation of one trajectory of $400$ jumps needs only $0.143$ seconds. Our estimation procedure from $400$ jumps provides both the estimators over the intervals $[0,0.8]$ and $[0.2,0.8]$ in $1.399$ seconds.

%%%
%%%	V2
%%% 
	\begin{figure}[!h]
	\centering
	\includegraphics[width=0.48\textwidth]{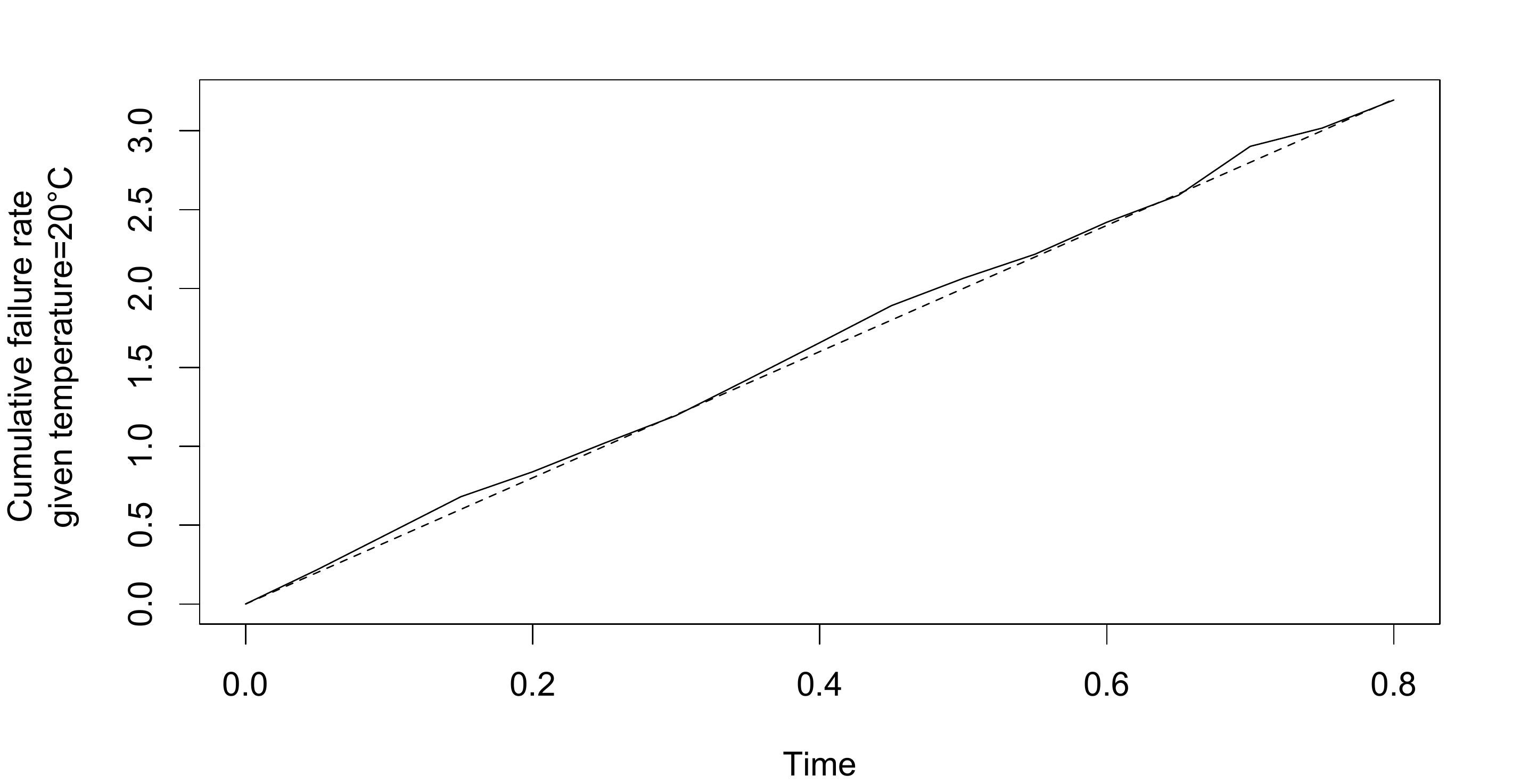}
	\includegraphics[width=0.48\textwidth]{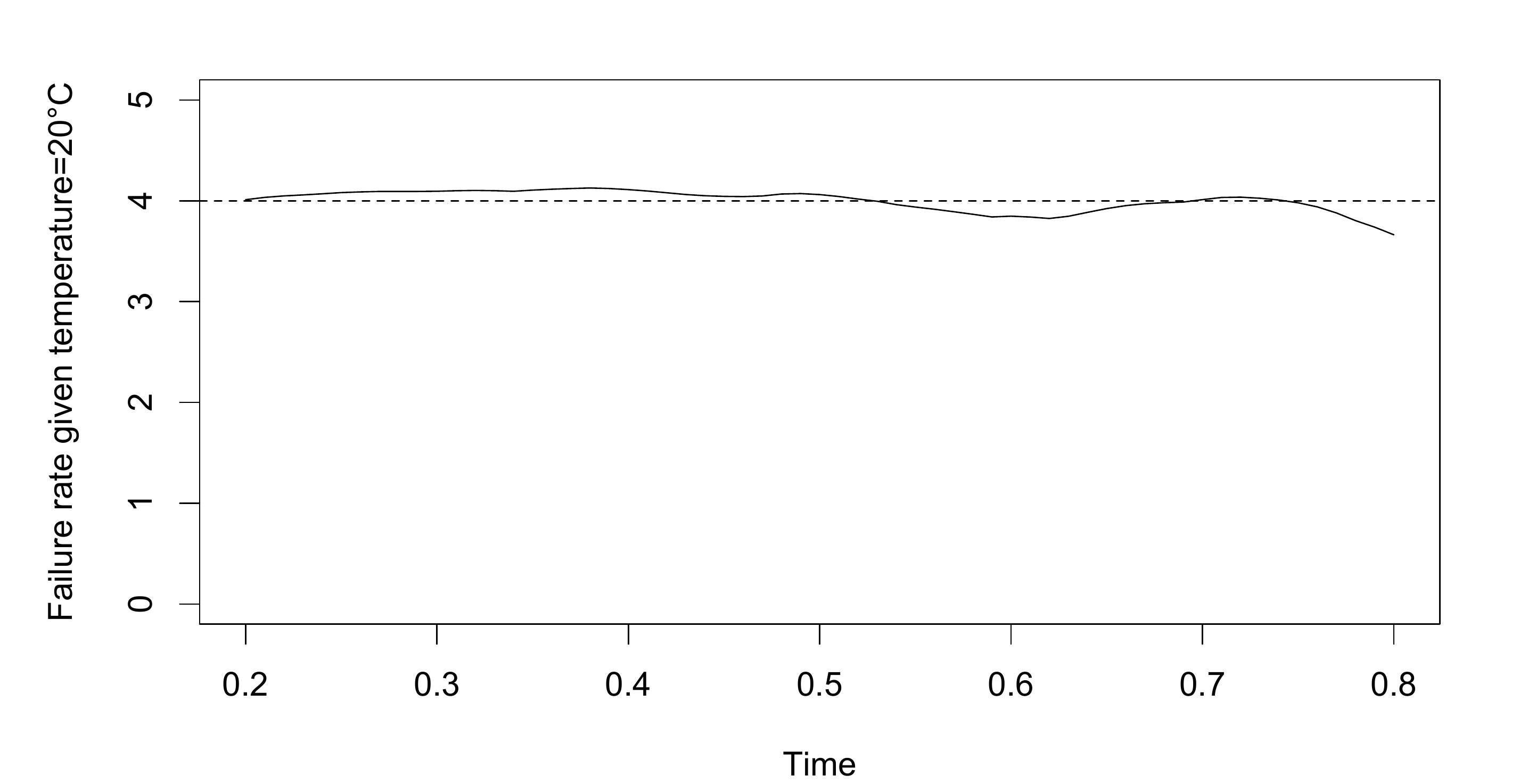}
	\caption{Estimation of the cumulative failure rate (left) and failure rate (right) given the temperature is $20^\circ$C from the observation of $400$ jumps. Estimates are drawn in solid lines, exact rates are in dashed lines.}
	\label{lab:fig1}
	\end{figure}
	
	\begin{figure}[!h]
	\includegraphics[width=0.48\textwidth]{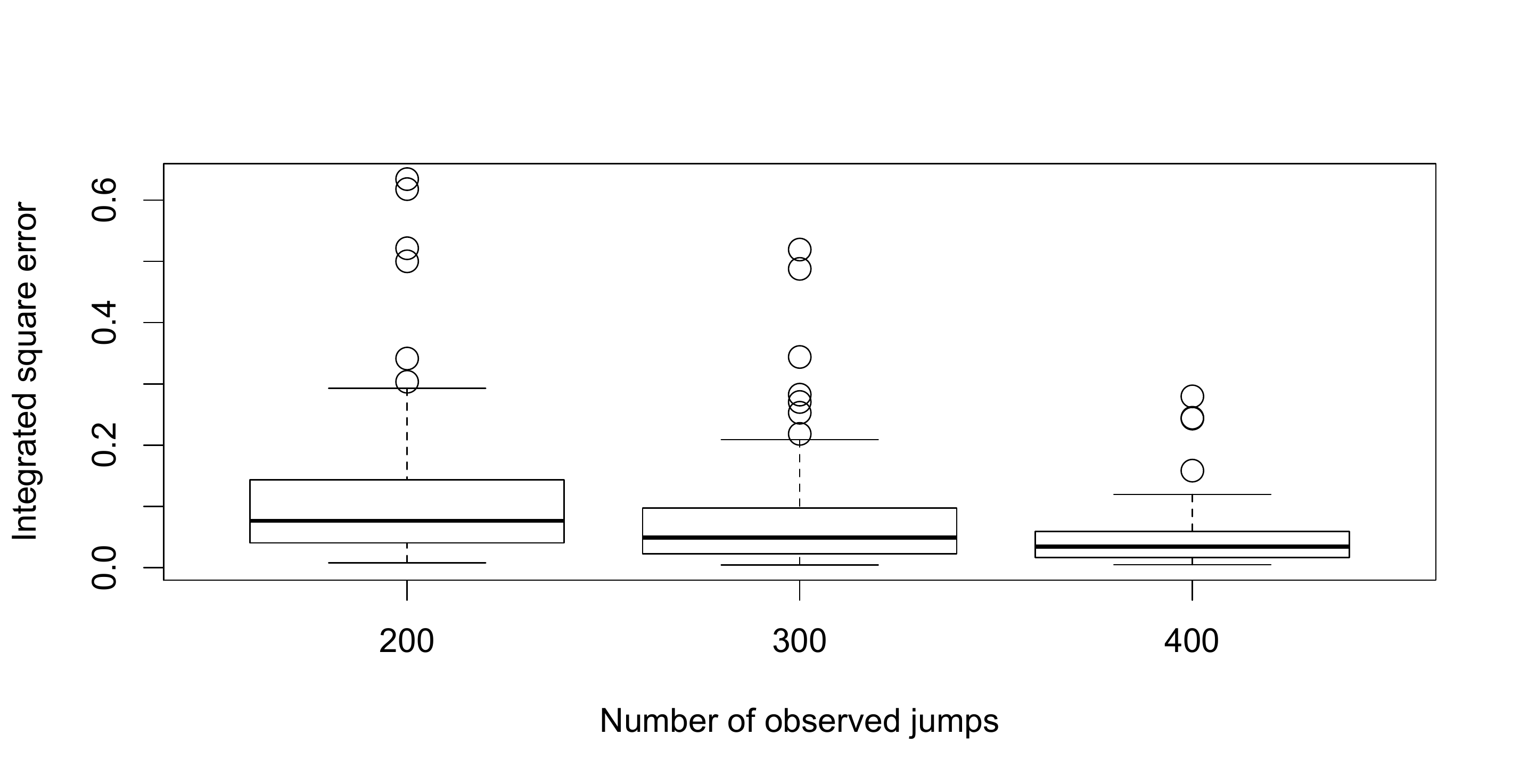}
	\includegraphics[width=0.48\textwidth]{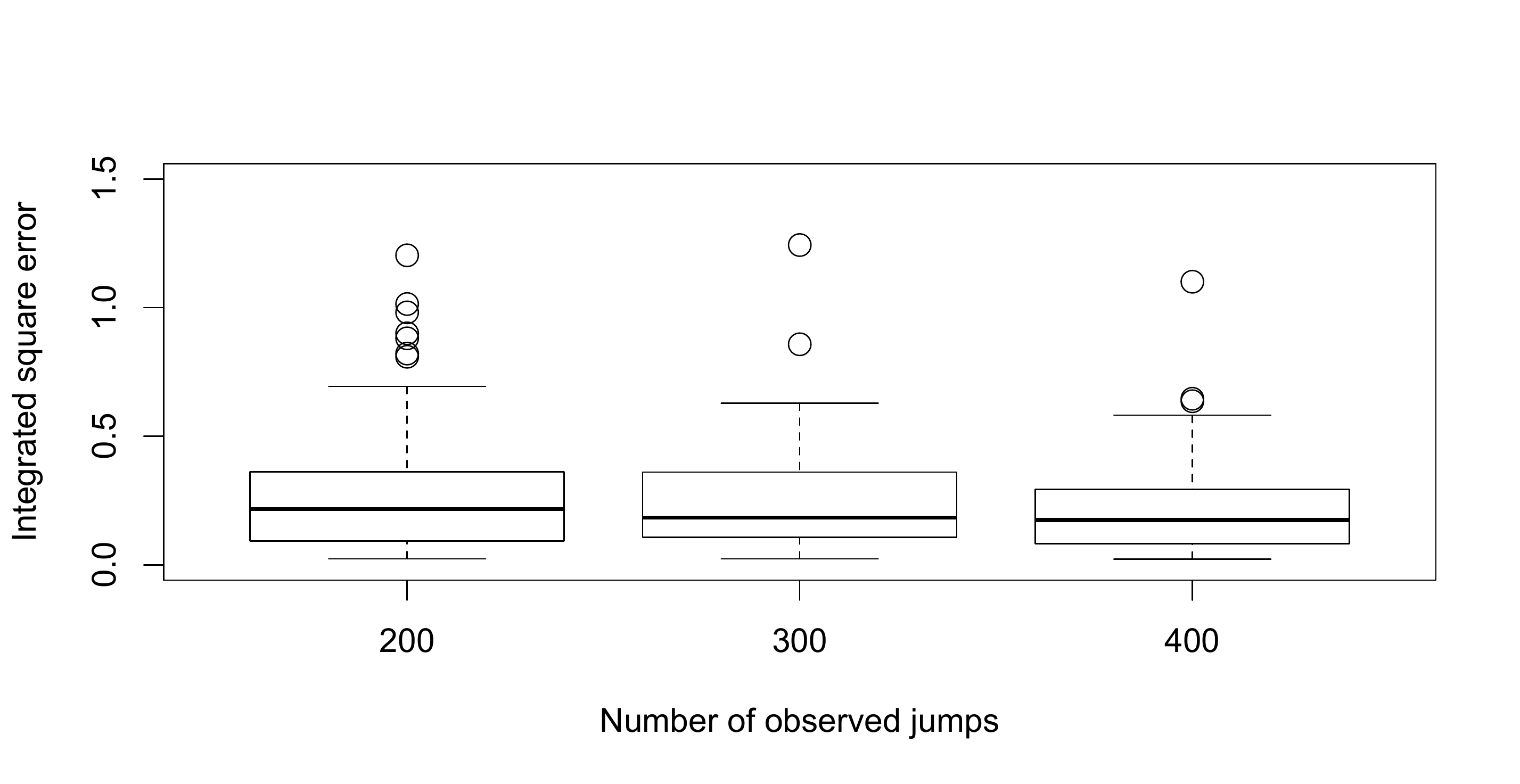}
	\caption{Boxplots of the integrated square error in the estimation of the cumulative failure rate (left) and failure rate (right) given the temperature is $20^\circ$C.}
	\label{lab:fig2}
	\end{figure}
%%%
%%%
%%%

		\section{Concluding remarks}
		
In the present paper, we have developed a powerful nonparametric method for estimating both the cumulative rate and the jump rate for a general class of non-homogeneous marked renewal processes. Two inherent difficulties are related to the presence of a deterministic censorship, and the absence of conditions on the state space of the process. Furthermore, the proposed estimation procedure needs only one observation of the process within a long time interval. In addition, the assumptions which we impose are directly connected to the primitive characteristics of the process. 
		
In this context, we proposed nonparametric estimators of the cumulative rate and the jump rate, and we proved results of uniform convergence in probability on every compact subset. As it is illustrated in Section \ref{s:simu}, the numerical behavior of both estimators is satisfactory on finite sample size. Furthermore, the method is easy to implement and not time-consuming. Finally, although this paper has an intrinsic interest, it is also a keystone for estimating the conditional density of the interarrival times for piecewise-deterministic Markov processes (see \cite{Az}).

	\appendix
	\section{Proofs of Section \ref{s:def}} \label{app1}
	
		\subsection{Proof of Proposition \ref{prop:indcond}}
		
We state the first conditional independence. Let us consider $h_1,\dots,h_n$ some bounded measurable functions mapping from $\R_+$ to $\R$. From $(\ref{dyn:S})$, we deduce
\begin{align}
\mathbf{E}_{\nu_0} \big[ h_1(S_1) & \dots h_n(S_n) | \mathcal{G}_n\big]\!  \nonumber\\
&= \mathbf{E}_{\nu_0}\Big[ h_1(S_1)\dots h_{n-1}(S_{n-1}) \mathbf{E}_{\nu_0} \big[ h_n(S_n) | \mathcal{G}_n \vee \sigma(S_1,\dots,S_{n-1})\big] \big| \mathcal{G}_n \Big] \nonumber\\
&= \mathbf{E}_{\nu_0}\Big[ h_1(S_1)\dots h_{n-1}(S_{n-1}) \mathbf{E}_{\nu_0} \big[ h_n(S_n) | \mathcal{G}_n \big] \big| \mathcal{G}_n \Big] \nonumber \\
&= \mathbf{E}_{\nu_0}\big[ h_1(S_1)\dots h_{n-1}(S_{n-1}) | \mathcal{G}_n\big]  \mathbf{E}_{\nu_0} \big[ h_n(S_n) | \mathcal{G}_n \big] . \label{ind1}
\end{align}
Moreover, by $(\ref{dyn:Z})$,
\begin{eqnarray*}
\mathbf{E}_{\nu_0} \big[ h_1(S_1) \dots h_{n-1}(S_{n-1}) | \mathcal{G}_n\big]\! &=&\!\mathbf{E}_{\nu_0}\Big[~\! \mathbf{E}_{\nu_0} \big[ h_1(S_1) \dots h_{n-1}(S_{n-1}) | \mathcal{G}_{n-1}\!\vee\! \sigma(\varepsilon_{n-2})\big] \big| \!\mathcal{G}_n\!\Big]\! . %\label{ind1}
\end{eqnarray*}
The product $h_1(S_1)\dots h_{n-1}(S_{n-1})$ is $\mathcal{G}_{n-1}\vee\sigma(\delta_0,\dots,\delta_{n-2})$-measurable and
$$ \sigma(\varepsilon_{n-2}) ~\bot ~ \mathcal{G}_{n-1}~\! \vee~\! \sigma(\delta_0 , \dots, \delta_{n-2}) .$$
Together with $(3)$ \cite[page 308]{Chung}, we obtain
$$  \mathbf{E}_{\nu_0} \big[ h_1(S_1) \dots h_n(S_{n-1}) | \mathcal{G}_{n-1} \vee \sigma(\varepsilon_{n-2})\big]   =  \mathbf{E}_{\nu_0} \big[ h_1(S_1) \dots h_{n-1}(S_{n-1}) | \mathcal{G}_{n-1} \big] .$$
Finally, we have
\begin{equation*} \mathbf{E}_{\nu_0} \big[ h_1(S_1) \dots h_{n-1}(S_{n-1}) | \mathcal{G}_n\big]  = \mathbf{E}_{\nu_0} \big[ h_1(S_1) \dots h_{n-1}(S_{n-1}) | \mathcal{G}_{n-1} \big]  .\end{equation*}
In the light of $(\ref{ind1})$, we obtain
$$ \mathbf{E}_{\nu_0} \big[ h_1(S_1) \dots h_n(S_n) | \mathcal{G}_n\big]  =   \mathbf{E}_{\nu_0} \big[ h_1(S_1) \dots h_{n-1}(S_{n-1}) | \mathcal{G}_{n-1}\big] \mathbf{E}_{\nu_0} \big[ h_n(S_n) | \mathcal{G}_n\big] .$$
Thus, a straightforward induction leads to
$$ \mathbf{E}_{\nu_0} \big[ h_1(S_1) \dots h_n(S_n) | \mathcal{G}_n\big]  = \prod_{i=1}^n \esp_{\nu_0} \big[h_i(S_i) | \mathcal{G}_{i} \big] .$$
Furthermore, taking for $j\neq i$, $h_j = \mathbf{1}$, yields to
$$ \mathbf{E}_{\nu_0} \big[ h_i(S_i) | \mathcal{G}_n\big] = \mathbf{E}_{\nu_0} \big[ h_i(S_i) | \mathcal{G}_i\big] .$$
Hence,
$$ \mathbf{E}_{\nu_0} \big[ h_1(S_1) \dots h_n(S_n) | \mathcal{G}_n\big]  =  \prod_{i=1}^n \esp_{\nu_0} \big[h_i(S_i) | \mathcal{G}_n \big] .$$
Thus, we have
$$ \bigvee_{j\neq i} \sigma(S_j) ~   \underset{\mathcal{G}_n}{\bot} ~  \sigma(S_i) ,$$
that immediately implies the expected result. The second conditional independence is straightforward from $(\ref{dyn:S})$. \fin

		\subsection{Proof of Lemma \ref{lem:mg}}
		
Let $0\leq s<t<t^\star(Z_i)$. In order to prove that $M^{i+1}$ is a martingale, we have to show that
\begin{align}
&\mathbf{E}_{\nu_0} \left[ N^{i+1}(t) - \int_0^t \lambda(Z_i,u) \mathbf{1}_{\{S_{i+1}\geq u\}} \ud u | \sigma(Z_i)\vee\mathcal{F}_s^{i+1}\right] = M^{i+1}(s)			\nonumber \\
\Leftrightarrow~& \mathbf{E}_{\nu_0}\left[ \mathbf{1}_{\{s<S_{i+1}\leq t\}} -\int_s^t \lambda(Z_i,u)\mathbf{1}_{\{S_{i+1}\geq u\}} \ud u | \sigma(Z_i)\vee \mathcal{F}_s^{i+1}\right] = 0		\nonumber\\
\Leftrightarrow~& \mathbf{1}_{\{S_{i+1}>s\}} \mathbf{E}_{\nu_0}\left[\mathbf{1}_{\{S_{i+1}\leq t\}} | \sigma(Z_i)\vee\mathcal{F}_s^{i+1}\right]  \nonumber \\
&\quad = \mathbf{1}_{\{S_{i+1}>s\}} \mathbf{E}_{\nu_0}\left[ \int_s^t \lambda(Z_i,u)\mathbf{1}_{\{S_{i+1}\geq u\}} \ud u | \sigma(Z_i)\vee \mathcal{F}_s^{i+1}\right] . \label{MGcont}
\end{align}
First, we shall prove that $(\ref{MGcont})$ is equivalent to
\begin{align}
\mathbf{1}_{\{S_{i+1}>s\}}& \esp_{\nu_0}\big[\mathbf{1}_{\{S_{i+1}\leq t\}} | \sigma(Z_i)\vee\sigma(N^{i+1}(s))\big] \nonumber  \\
&=  \mathbf{1}_{\{S_{i+1}>s\}} \mathbf{E}_{\nu_0}\left[ \int_s^t \lambda(Z_i,u)\mathbf{1}_{\{S_{i+1}\geq u\}} \ud u | \sigma(Z_i)\vee\sigma(N^{i+1}(s))\right] . \label{MGcont2}
\end{align}
In the light of Lemma 6.2 of \cite{Kal}, if the following conditions are satisfied,
$$ \{S_{i+1}>s\} \in \Big(\sigma(Z_i) \vee \mathcal{F}_s^{i+1}\Big) \cap \Big(\sigma(Z_i) \vee \sigma(N^{i+1}(s))\Big)$$
and
$$ \{S_{i+1}>s\} \cap \Big(\sigma(Z_i) \vee \mathcal{F}_s^{i+1}\Big)  = \{S_{i+1}>s\} \cap \Big( \sigma(Z_i) \vee \sigma(N^{i+1}(s))\Big),$$
then $(\ref{MGcont})$ and $(\ref{MGcont2})$ are equivalent. The first condition obviously holds. On the other hand, as $\sigma(N^{i+1}(s))$ is a sub-$\sigma$-field of $\mathcal{F}_s^{i+1}$, we have
$$ \{S_{i+1}>s\} \cap \Big(\sigma(Z_i) \vee \mathcal{F}_s^{i+1}\Big)  \supset \{S_{i+1}>s\} \cap \Big(\sigma(Z_i) \vee \sigma(N^{i+1}(s))\Big) .$$
Hence, we only have to prove the reciprocal inclusion,
$$ \{S_{i+1}>s\} \cap \Big( \sigma(Z_i) \vee \mathcal{F}_s^{i+1}\Big)  \subset  \{S_{i+1}>s\} \cap \Big(\sigma(Z_i) \vee \sigma(N^{i+1}(s))\Big) .$$
Let us consider the $\lambda$-system (see for instance Definition 1.10 of \cite{KLENKE}) $\mathcal{D}$ given by
$$
\mathcal{D}=\left\{ C \in \sigma(Z_i)\vee\mathcal{F}_s^{i+1} ~:~\{S_{i+1}>s\}\cap C\in\{S_{i+1}>s\}\cap \Big(\sigma(Z_i)\vee\sigma(N^{i+1}(s))\Big) \right\} ,
$$
and the $\pi$-system (see Definition 1.1 of \cite{KLENKE}) $\mathcal{C}$ defined by
$$
\mathcal{C} = \big\{ \Omega , \{S_{i+1}\leq u\}\cap\{Z_i\in B\}, \{Z_i\in B\} ~:~ 0\leq u\leq s, B\in\mathcal{B}(E) \big\} .
$$
We immediately obtain that $\mathcal{C}\subset\mathcal{D}$ and $\sigma(\mathcal{C})=\sigma(Z_i)\vee\mathcal{F}_s^{i+1}$. Moreover, $\sigma(Z_i)\vee\mathcal{F}_s^{i+1}\subset\mathcal{D}$ on the strength of the monotone class theorem (see for example Theorem 1.19 of \cite{KLENKE}). From the definition of $\mathcal{D}$, we straightforward deduce the reciprocal inclusion. Consequently, $(\ref{MGcont})$ and $(\ref{MGcont2})$ are equivalent. Now, we only have to verify that $(\ref{MGcont2})$ holds. On the one hand, we have
\begin{align*}
\mathbf{1}_{\{S_{i+1}>s\}}& \esp_{\nu_0}\big[\mathbf{1}_{\{S_{i+1}\leq t\}} | \sigma(Z_i)\vee\sigma(N^{i+1}(s))\big] \\
&= \mathbf{1}_{\{S_{i+1}>s\}} \prob_{\nu_0}\big( S_{i+1}\leq t | Z_i , S_{i+1}>s\big) \\
&=  \mathbf{1}_{\{S_{i+1}>s\}} \frac{ \prob_{\nu_0}\big(s< S_{i+1}\leq t | Z_i \big)}{\prob_{\nu_0}\big( S_{i+1}> s| Z_i\big)} .
\end{align*}
From the definitions of $f$ $(\ref{pms:def:f})$ and $G$ $(\ref{pms:def:G})$, we deduce that
\begin{align*}
\mathbf{1}_{\{S_{i+1}>s\}}& \esp_{\nu_0}\big[\mathbf{1}_{\{S_{i+1}\leq t\}} | \sigma(Z_i)\vee\sigma(N^{i+1}(s))\big] \\
&= \mathbf{1}_{\{S_{i+1}>s\}} \frac{\int_s^t f(Z_i,u)\ud u}{G(Z_i,s)} .
\end{align*}
On the other hand,
\begin{align*}
\mathbf{1}_{\{S_{i+1}>s\}} &\esp_{\nu_0}\left[ \int_s^t \lambda(Z_i,u) \mathbf{1}_{\{S_{i+1}\geq u\}}\ud u | \sigma(Z_i)\vee\sigma(N^{i+1}(s))\right] \\
&= \mathbf{1}_{\{S_{i+1}>s\}} \int_s^t \lambda(Z_i,u) \prob_{\nu_0}\big(S_{i+1}\geq u | Z_i , S_{i+1}>s\big) \ud u \\
&=\mathbf{1}_{\{S_{i+1}>s\}} \int_s^t \lambda(Z_i ,u) \frac{ \prob_{\nu_0}\big(S_{i+1}\geq u | Z_i\big)}{\prob_{\nu_0}\big(S_{i+1}>s | Z_i\big)}\ud u .
\end{align*}
Thus, since $u\leq t<t^\star(Z_i)$,
\begin{align*}
\mathbf{1}_{\{S_{i+1}>s\}} &\esp_{\nu_0}\left[ \int_s^t \lambda(Z_i,u) \mathbf{1}_{\{S_{i+1}\geq u\}}\ud u | \sigma(Z_i)\vee\sigma(N^{i+1}(s))\right] \\
&=\mathbf{1}_{\{S_{i+1}>s\}} \frac{\int_s^t \lambda(Z_i ,u)G(Z_i,u) \ud u}{G(Z_i,s)} \\
&= \mathbf{1}_{\{S_{i+1}>s\}} \frac{\int_s^t f(Z_i,u)\ud u}{G(Z_i,s)}.
\end{align*}
As a consequence, we proved $(\ref{MGcont2})$ and the expected result. \fin

		\section{Proofs of Section \ref{s:continuous}} \label{app2}
	
			\subsection{Proof of Proposition \ref{prop:ergodic}}
			~
\begin{enumerate}
\item Let $x\in E$ and $\nu_0=\delta_{\{x\}}$. Then, from Assumption \ref{hyp:ergodic},
$$ \lim_{n\to+\infty} \| Q^n(x,\cdot) - \nu\|_{TV} = 0 .$$
Let us consider $A\in\mathcal{B}(E)$ such that $\nu(A)>0$. Thus, $Q^n(x,A)>0$ for $n$ large enough. In the light of Proposition 4.2.1 of \cite{MandT}, the Markov chain $(Z_n)_{n\geq0}$ is, therefore, $\nu$-irreducible.
\item Furthermore, on the strength of Theorem 4.3.3 of \cite{HLL}, $(Z_n)_{n\geq0}$ is positive Harris-recurrent and aperiodic.
\item A positive Harris-recurrent Markov chain admits a unique (up to a multiple constant) invariant measure (see for instance the introduction of \cite{MR1712909}). \fin
\end{enumerate}

			\subsection{Proof of Lemma \ref{lem37}}
			
Let $g$ be a measurable function bounded by $1$. According to Lemma \ref{desint},
\begin{eqnarray*}
\left| \int_{{E}\times\mathbf{R}_+} g(z,s) \big[\eta_n(\ud z \times \ud s) - \eta(\ud z \times \ud s)\big] \right| &\leq &  \|h\|_{\infty} \| \nu_n - \nu \|_{TV} ,
\end{eqnarray*}
on the strength of Fubini's theorem, where the function $h$ is defined by,
$$ \forall z\in{E},~h(z) = \int_{\mathbf{R}_+} g(z,s) \mu_z(\ud s) .$$
Furthermore, since $|h(z)|\leq 1$ for any $z$, we have
\begin{eqnarray*}
\left| \int_{{E}\times\mathbf{R}_+} g(z,s) \big(\eta_n(\ud z \times \ud s) - \eta(\ud z \times \ud s)\big) \right| 	&\leq &	\|\nu_n - \nu\|_{TV},
\end{eqnarray*}
Thus, by Assumption \ref{hyp:ergodic}, $\| \eta_n - \eta \|_{TV}$ tends to $0$. \fin

			\subsection{Proof of Lemma \ref{r001}}

By assumption, $t^\star$ is continuous. Moreover, for any $\xi\notin\partial E$, $t^\star(\xi)>0$. Since $\overline{A}$ is a compact set such that $\overline{A}\cap\partial E=\emptyset$, we have
$$ \inf_{\xi\in\overline{A}} t^\star(\xi) > 0 .$$
Hence,
$$ \inf_{\xi\in A} t^\star(\xi) \geq \inf_{\xi\in\overline{A}} t^\star(\xi) > 0 .$$
For any $s$, the function $\lambda(\cdot,s)$ is continuous because it is Lipschitz. Furthermore, $\lambda(\xi,\cdot)$ is bounded by $M$, not depending on $\xi$, and locally integrable. Recall that we have
$$ G(\xi, t) = \exp\left(-\int_0^t \lambda(\xi,s)\ud s \right).$$
Thus, by Lebesgue's theorem of continuity under the integral sign, $G(\cdot,t)$ is continuous, and satisfies $G(\xi,t)>0$ for any $\xi$. Hence,
$$\inf_{\xi\in A} G(\xi,t) \geq \inf_{\xi\in\overline{A}} G(\xi,t) >0 .$$
Both inequalities are, therefore, proved. \fin

% ======================================================================

\nocite{*}
\bibliographystyle{acm}
\bibliography{nhmrp_main_v2} 

% ======================================================================

\end{document}